\newtheorem{example}[theorem]{Example}
\newtheorem{thm}{Theorem}[section]
\newtheorem{lem}[thm]{Lemma}
\theoremstyle{definition}
\newtheorem{defn}[thm]{Definition}
\theoremstyle{remark}
\numberwithin{equation}{section}
\newcommand{\bb}[1]{\boldsymbol{#1}}
\newcommand{\signedpermutation}{
\pgfmatharray{\sp}{0}\let\n\pgfmathresult
\pgfmathparse{360.0/\n}\let\segment\pgfmathresult
\pgfmathparse{\segment/2}\let\shift\pgfmathresult
\def\radius{1.5cm}
\def\labelrad{1.7cm}
\def\regionboundaryin{1.4cm}
\def\regionboundaryout{1.6cm}
\scalebox{1.3}{\begin{tikzpicture}
\foreach \x in {1,2,...,\n}
{
  \draw[thick] (360-\x*\segment+90:\regionboundaryin)
             --(360-\x*\segment+90:\regionboundaryout);
  \pgfmatharray{\sp}{\x}\let\tmp\pgfmathresult 
  \pgfmathparse{int(\tmp)}\let\tmp\pgfmathresult 
  \node at (360-\x*\segment+90+\shift:\labelrad) {\tmp};
  \pgfmathparse{360-(\x-1)*\segment+90}\let\alpha\pgfmathresult;
  \pgfmathparse{360-(\x-1)*\segment+90-\segment}\let\beta\pgfmathresult;
  \pgfmathgreater{\tmp}{0}\let\decision\pgfmathresult
  \ifnum \decision=1
    \draw[color=black,very thick,>=latex,->] (\alpha:\radius) arc (\alpha:\beta:\radius);
  \else
    \draw[color=black,very thick,>=latex,->] (\beta:\radius) arc (\beta:\alpha:\radius);
  \fi
};
\end{tikzpicture}}}
\newcommand{\unsignedpermutation}{
\pgfmatharray{\sp}{0}\let\n\pgfmathresult
\pgfmathparse{360.0/\n}\let\segment\pgfmathresult
\pgfmathparse{\segment/2}\let\shift\pgfmathresult
\def\radius{1.5cm}
\def\labelrad{1.7cm}
\def\regionboundaryin{1.4cm}
\def\regionboundaryout{1.6cm}
\scalebox{1.3}{\begin{tikzpicture}
\foreach \x in {1,2,...,\n}
{
  \draw[thick] (360-\x*\segment+90:\regionboundaryin)
             --(360-\x*\segment+90:\regionboundaryout);
  \pgfmatharray{\sp}{\x}\let\tmp\pgfmathresult 
  \pgfmathparse{int(\tmp)}\let\tmp\pgfmathresult 
  \node at (360-\x*\segment+90+\shift:\labelrad) {\tmp};
  \pgfmathparse{360-(\x-1)*\segment+90}\let\alpha\pgfmathresult;
  \pgfmathparse{360-(\x-1)*\segment+90-\segment}\let\beta\pgfmathresult;
  \pgfmathgreater{\tmp}{0}\let\decision\pgfmathresult
  \ifnum \decision=1
    \draw[color=black,very thick] (\alpha:\radius) arc (\alpha:\beta:\radius);
  \else
    \draw[color=gray,very thick] (\beta:\radius) arc (\beta:\alpha:\radius);
  \fi
};
\end{tikzpicture}}}
\begin{document}
\volume{}
\title{Majorisation as a theory for uncertainty}
\titlehead{}
\authorhead{}
\corrauthor[1]{Victoria Volodina}
\author[1]{Nikki Sonenberg}
\author[2]{Edward Wheatcroft}
\author[2]{Henry Wynn}

\corremail{v.volodina@turing.ac.uk}
\corraddress{The Alan Turing Institute, 96 Euston Road, London NW1 2DB}
\address[1]{The Alan Turing Institute, 96 Euston Road, London NW1 2DB}
\address[2]{London School of Economics and Political Science, Houghton Street, London, WC2A 2AE}

\setlength{\parindent}{0pt}

\dataO{mm/dd/yyyy}
\dataF{mm/dd/yyyy}

\abstract{
 
Majorisation, also called rearrangement inequalities, yields a type of stochastic ordering in which two or more distributions can be compared. In this paper we argue that majorisation is a good candidate as a theory for uncertainty.  
We present operations that can be applied to study uncertainty in a range of settings and demonstrate our approach to assessing uncertainty with examples from well known distributions and from applications of climate projections and energy systems.

}

\keywords{Majorisation, Inverse Mixing, Uncertainty, Decreasing Rearrangements}

\maketitle

\section{Introduction}\label{sec:intro}
Rearrangements manipulate the shape of a geometric object while preserving its size \cite{Burchard2009}. Majorisation arises from rearrangements and provides an order on probability vectors from which various inequalities follow, as first established by Hardy, Littlewood and Polya \cite{Hardy1988} which, in turn, led to key work by Marshall, Olkin and Arnold \cite{Marshall1979}. 
Applications of majorisation have appeared in diverse fields including economics \cite{Arnold2018, Lorenz1905}, chemistry \cite{Klein1997}, statistics \cite{Degroot1988,Giovagnoli1987,Pukelsheim1987}, and more recently quantum information \cite{Partovi2011}. 

The concept of majorisation yields a \emph{partial} ordering, not a \emph{total} ordering, that is there are vectors for which neither vector majorises the other, so they are not comparable. In contrast, consider the well known measure of uncertainty, Shannon entropy, that corresponds to the resources required to send information that will eliminate the uncertainty \cite{cover1999}. 
Such entropic measures of uncertainty impose a total ordering.
Majorisation expresses a form of uncertainty, as the word ``more" in the statement ``more uncertain'' can be interpreted as a statement of relative order through the majorisation partial order.
Further, this notion of uncertainty does not have the extra requirement that it relies on a measure of information \cite{jacobs2014},  and does not make any assumptions about its functional form \cite{friedland2013}.

When vectors cannot be compared with respect to the majorisation ordering,  questions of relative uncertainty are unanswerable: one would have to specify in what way one was more certain or uncertain, at which point one could select a specific order-preserving comparison function.  
The partial order is weaker in the mathematical sense but, if events {\em can} be compared, the comparison is stronger as it can be made for fewer pairs of events.  
This is not a shortcoming of majorisation, but rather a consequence of its rigorous approach to ordering uncertainty~\cite{Partovi2011}. 
 
The challenge of defining the meaning of uncertainty 
has long been recognised: in 1914 Bertrand Russell wrote ``These varying degrees of certainty attaching to different data may be regarded as themselves forming part of our data: they, along with the other data, lie within the vague, complex, inexact body of knowledge which it is the business of the philosopher to analyse'' \cite{russell1914our}.

We present two properties that we believe make majorisation a good candidate for a theory of uncertainty: (i) being dimension-free and (ii) being geometry-free. These properties create the possibility of comparing multivariate distributions with different support and different numbers of dimensions. 

Majorisation is, in a well-defined sense, \emph{dimension-free}. In this paper, we show how this approach enables us to create, for a multivariate distribution, a univariate decreasing rearrangement (DR) by considering a decreasing threshold and ``squashing'' all of the multivariate mass for which the density is above the threshold to a univariate mass adjacent to the origin. 
  
The \emph{geometry-free} property follows because majorisation is independent of the support of the distribution. This distinguishes the approach from metric based measures such as variance and various types of multidimensional generalised variances \cite{pronzato2018simplicial}.
Metric-based dispersion orderings are well known and discussed in, for example, \cite{bickel1979descriptive}, with multivariate versions in  \cite{belzunce2008multivariate,giovagnoli1995multivariate}. 
 
Our contribution is to introduce a set of operations that can be applied to study uncertainty in a range of settings. These operations include how to project from many dimensions into one, how to combine two probability distributions, and mixing uncertainties (with different weights). We believe that the form of uncertainty captured by majorisation is close in spirit to entropy, but that it is less restrictive. We illustrate the introduced operations with examples and demonstrate that entropic measures can be used together with majorisation.
 
This paper is organised as follows, in the remainder of this section we introduce the concept of majorisation for discrete probabilities and in Section \ref{sec:cont_major} we present results for the continuous case. In Section \ref{sec:multivariate} we present the key concepts for multivariate distributions. In Section \ref{sec:operations}, we collect together operations for the study of uncertainty and in Section \ref{sec:algebra} we define a lattice and an algebra for uncertainty. We present empirical applications in Section \ref{sec:empirical} and concluding remarks are given in Section~\ref{sec:conclusion}.

\subsection{Discrete majorisation and related work}\label{sec:discrete}
\begin{defn} \cite{Marshall1979} Consider two discrete distributions with $n$-vectors of
probabilities $p_1=(p^{(1)}_1,\ldots, p_n^{(1)})$  and $p_2=(p^{(2)}_1,\ldots, p_n^{(2)})$, where
$\sum_{i=1}^n p_i^{(1)}=\sum_{i=1}^n p_i^{(2)}=1$.
Placing the probabilities in decreasing (i.e., nonincreasing) order:
\begin{align}
\tilde{p}^{(1)}_1 \geq \ldots \geq \tilde{p}_n^{(1)}\quad \text{ and } \quad \tilde{p}^{(2)}_1 \geq \ldots \geq \tilde{p}_n^{(2)},
\end{align}
it is then said that $p_2$ majorises $p_1$, written $p_1 \preceq p_2$, if 
\begin{align}
\sum_{i=1}^k \tilde{p}_i^{(1)} \leq \sum_{i=1}^k \tilde{p}_i^{(2)}, \quad k=1, \dots, n-1.
\end{align}
\end{defn} 

This means that the largest element of $p_2$ is greater than the largest element of $p_1$ and the largest two elements of $p_2$  are greater than the largest two elements of $p_1$, etc. This implies that the distribution of $p_1$ is more \textit{disordered} or spread out than $p_2$, which, in turn, means that the distribution of $p_2$ has less uncertainty than that of $p_1$. For example, for any $p$, $(1/n, \dots, 1/n)\preceq p\preceq (1, 0, \dots, 0)$.

Marshall \emph{et al.} \cite{Marshall1979} provided several equivalent conditions to $p_1\preceq p_2$. We present three (A1-A3) of the best known in detail below.\\
(A1) There is a doubly stochastic $n\times n$ matrix $P$, such that
\begin{align}\label{equiv:doubly}
p_1 = P p_2.
\end{align}
The intuition of this result is that a probability vector which is a mixture of the permutations of another is more disordered. The relationship between a stochastic matrix $P$ and the stochastic transformation function in the refinement concept was presented by DeGroot \cite{Degroot1988}. \\
(A2)  Schur \cite{Schur1923}  demonstrated that, if (A1) holds for some stochastic matrix $P$, then for all continuous convex functions $s( \cdot )$ and for all $n$,
\begin{align}\label{condition3}
\sum_{i=1}^n s(\tilde{p}_i^{(1)}) \leq  \sum_{i=1}^n s(\tilde{p}_i^{(2)}).
\end{align}

The sums in Equation (\ref{condition3}) are special cases of the more general Schur-convex functions on probability vectors.  In particular, information measures such as Shannon information, for which $s(y)=y\log(y)$, and Tsallis information, for which $s(y)=\frac{y}{\gamma}(y^{\gamma}-1), \gamma>0$, where, in the limit as $\gamma\rightarrow 0$, Shannon is obtained. 

For any function $h(y)$ of Shannon or Tsallis  entropies, where $h(y) = -s(y)$, $p_1  \preceq p_2$ implies $\sum_{i=1}^n h(\tilde{p}_i^{(1)}) \ge \sum_{i=1}^n h(\tilde{p}_i^{(2)})$, but not conversely. Further, if for every such function $h(y)$ this relationship holds, then  $p_1  \preceq p_2$.  (A2) indicates that the ordering imposed by majorisation is stronger than the ordering by any single entropic measure and, in a sense, is equivalent to all such entropic measures taken collectively \cite{Partovi2011}.

\noindent (A3) Let $\pi(p) = (p_{\pi(1)}, \ldots, p_{\pi(n)})$ be the vector whose entries are a permutation $\pi$ of the entries of a probability vector $p$, with symmetric group $S$, then 
\begin{align}
p_1 \in \mbox{conv}_{\pi \in S} \{\pi(p_2)\}.
\end{align}
That is to say, $p_1$ is in the convex hull of all permutations of entries of $p_2$. Majorisation is a special case of group-majorisation (G-majorisation) \cite{Eaton1977} for the symmetric (permutation) group \cite{Giovagnoli1985}. 
 
\section{Continuous majorisation}\label{sec:cont_major}
In this section, we define continuous majorisation, the analog of the partial sums definition in Section \ref{sec:discrete}, following Hardy \emph{et al.} \cite{Hardy1988}.

\begin{defn}
\label{drdefn}
Let $f(x)$ be a (univariate) pdf and define $m(y)=\mu \{z: f(z) \geq y\}$, where $\mu$ is Lebesgue measure. The decreasing rearrangement of $f(x)$ is
\begin{align}
\tilde{f}(z)=\mbox{sup}\{t: m(t) >z\},\; z >0.
\end{align}
\end{defn}

\begin{defn}
Let $\tilde{f}_1(z)$ and $\tilde{f}_2(z)$ be the DR of two pdfs $f_1(x)$ and $f_2(x)$, respectively and $\tilde{F}_1(z)$ and $\tilde{F}_2(z)$ their corresponding cdfs. We say that $f_2(x)$ majorises $f_1(x)$, written
$f_1 \preceq f_2$, if and only if
\begin{align} 
\tilde{F}_1(z) \leq  \tilde{F}_2(z),\;  z > 0.
\end{align}
\end{defn}

Similarly to the discrete case, we give three equivalent conditions for continuous majorisation.\\
\noindent (B1) For some non-negative doubly stochastic kernel $P(x,t)$,
\begin{align}
f_1(x) = \int P(x,t) f_2(t) dt.
\end{align}
\noindent (B2)  For all continuous convex functions $s(\cdot)$,
\begin{align} \label{equiv_convex}
\int s(f_1(z)) dz \leq \int s(f_2(z))dz.
\end{align}
\noindent (B3) Slice condition:  
\begin{align}
\int(f_1(x)-c)_+ dx \leq \int(f_2(x)-c)_+dx, \quad c>0. \label{eq:slice}
\end{align}

\begin{example}\label{example|_beta}
Consider the Beta$(3,2)$ distribution with pdf $p(z)=12(1-z)z^2$. We look for $z_1$ and $z_2$, where $z_1<z_2$, such that $p(z_1)=p(z_2)=c$, illustrated in
Figure~\ref{fig:newbetaplots} (left panel). Setting $z=z_2-z_1$, we have
\begin{align}\label{example_beta_system}
\begin{cases}
p(z_1)=12(1-z_1)z_1^2=y ,\\
p(z_2)=12(1-z_2)z_2^2=y,\\
z_2-z_1=z,\\
0\leq z\leq 1,
\end{cases}
\end{align}
from which the DR can be obtained by eliminating $z_1$ and $z_2$ and setting
$\tilde{f} = y$. With the elimination variety, a set of points (solutions) satisfying a system of polynomial equations being equal to zero,  
$ 48z^6 - 96z^4 + 9y^2 + 48z^2 - 16y =0$, we obtain
\begin{align}
\tilde{f}(z) = \left\{ 
 \begin{array}{l}  
\frac{8}{9} 
+ \frac{4}{9} (-27z^6 + 54z^4 - 27z^2 + 4)^{\frac{1}{2}}, \quad 0 \leq z \leq \frac{1}{\sqrt{3}},  \\                                              \frac{8}{9} -  \frac{4}{9} (-27z^6 + 54z^4 - 27z^2 + 4)^{\frac{1}{2}}, \quad \frac{1}{\sqrt{3}} \leq z \leq 1.
\end{array}
\right.
\end{align}

The DR cdf is obtained by adjoining the equations $Y = F(z_2)-F(z_1)$ to get the second variety $3z^8-12z^6+16z^4+9Y^2-16Yz = 0$, then
\begin{equation}
\tilde{F}(z) = \frac{z}{9} \big(\sqrt{-(3z^2-4)^3} +8 \big),
\end{equation}
and is illustrated in Figure \ref{fig:newbetaplots} (right panel) alongside the pdf (central panel).
 
 \begin{figure}[h!]
\begin{center}
\includegraphics[height=4.5cm]{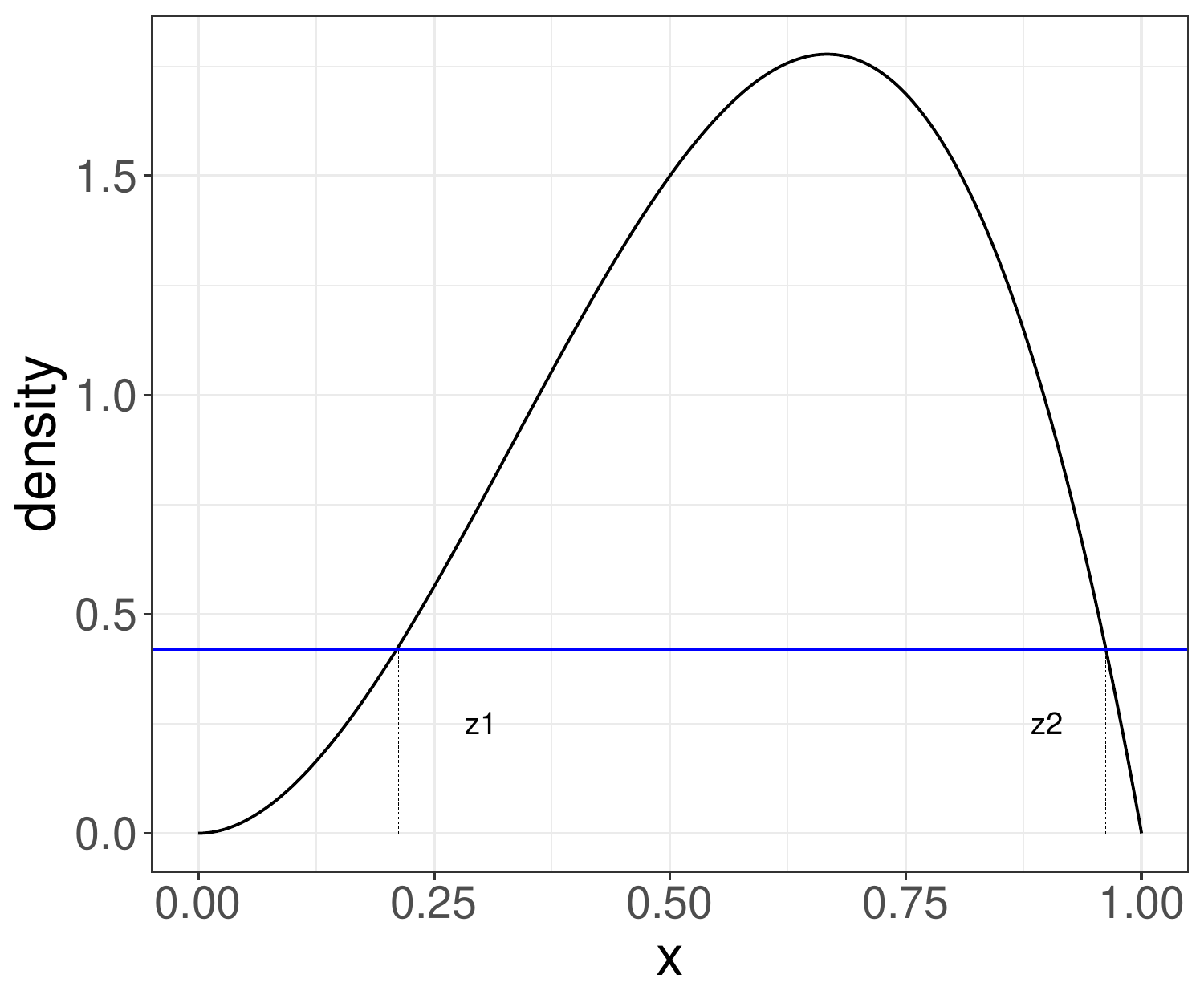}
\includegraphics[height=4.5cm]{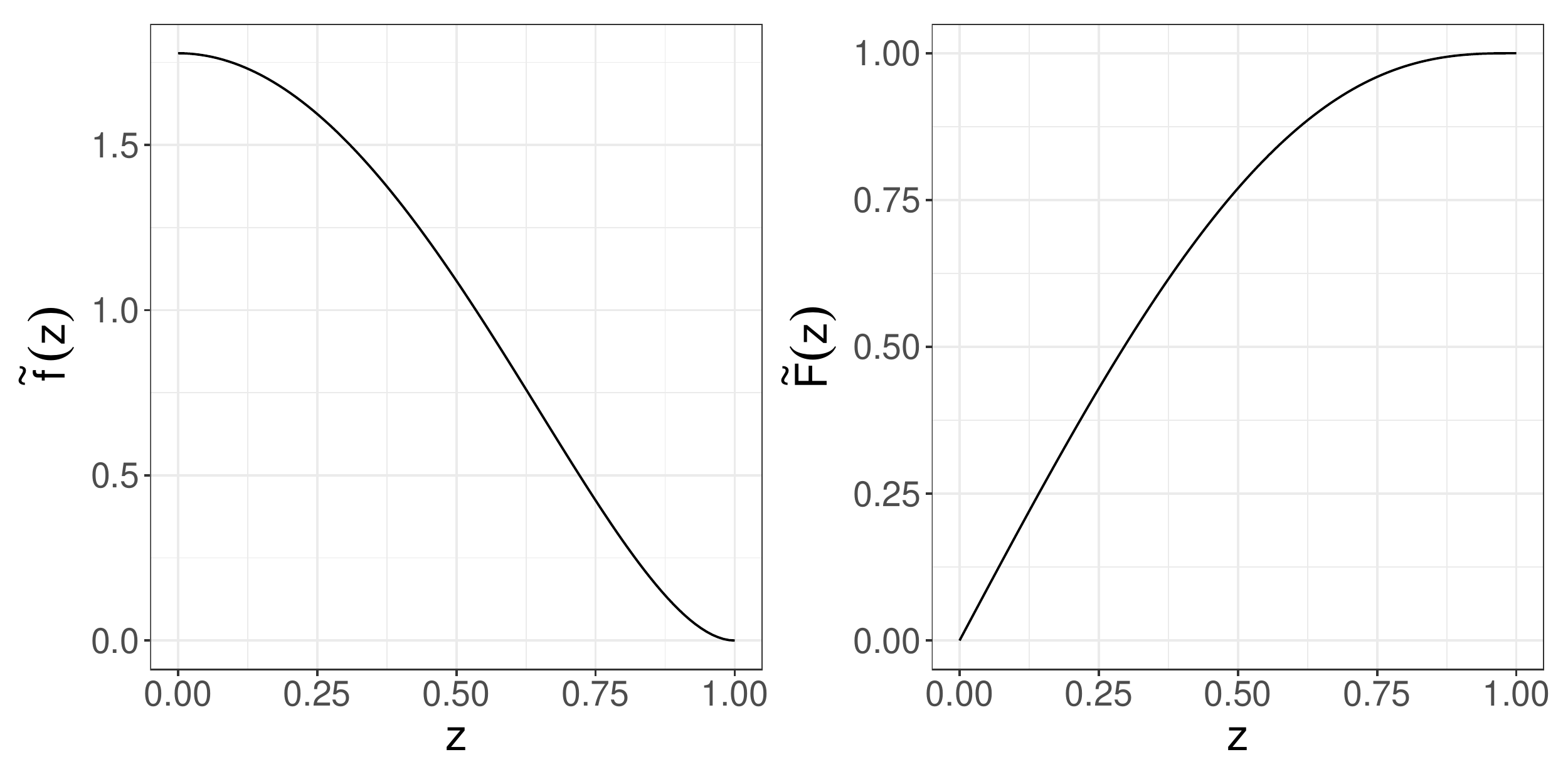}
\caption{Example \ref{example|_beta}. \emph{Left panel:} Identification of $z_{1}$ and $z_{2}$,  \emph{central panel}: DR pdf $\tilde{f}(z)$,  \emph{right panel}: DR cdf $\tilde{F}(z)$.}
\label{fig:newbetaplots}
\end{center}
\end{figure}
\end{example}

It is hard to derive the DR in the general case when $f_1\preceq f_2$ in which $f_i(x) \sim \mbox{Beta}(a_i,b_i),\; i=1,2$. However, we can prove the following.

\begin{lem}
Assume $a_1 , b_1 , a_2 , b_2 > 1$. If pdfs $f_1 (x)\sim \text{Beta}(a_1 , b_1 )$ and $f_2 (x)\sim\text{Beta}(a_2 , b_2 )$, have the same mode, then  $\max_x  f_1(x) \leq  \max_x f_2(x)$ if and only if $X_1 \preceq X_2.$
\end{lem}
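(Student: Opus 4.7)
The plan is to verify the slice condition~(B3) by exploiting the shared mode to put both densities into a common log-linear form. Write $s_i := a_i+b_i-2 > 0$ so that $a_i-1 = m^* s_i$ and $b_i-1 = (1-m^*)s_i$ for the common mode $m^*$. Both densities then factor as
\begin{equation}
f_i(x) = C_i\, e^{s_i h(x)}, \qquad h(x) := m^* \log x + (1-m^*)\log(1-x),
\end{equation}
where $C_i$ is a normalising constant and the shape function $h$ depends only on $m^*$ (not on $s_i$). The function $h$ is strictly concave on $(0,1)$ with unique maximum at $x=m^*$, so the convex function $u(x) := h(m^*) - h(x) \ge 0$ has strictly increasing interval sublevel sets around $m^*$. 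Setting $v_i(c) := s_i^{-1}\log(\max f_i/c)$, one has $\{f_i \ge c\} = \{u \le v_i(c)\}$, so $|I_i(c)| := \mu\{f_i \ge c\} = M(v_i(c))$ for a single strictly increasing function $M$ common to both densities.

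The crux is a single-crossing property for $|I_1|-|I_2|$. Assume first $s_1 < s_2$. Since
\begin{equation}
v_2(c) - v_1(c) = \left(\frac{\log\max f_2}{s_2} - \frac{\log\max f_1}{s_1}\right) + \left(\frac{1}{s_1} - \frac{1}{s_2}\right)\log c
\end{equation}
is strictly increasing in $c$ and runs from $-\infty$ to $+\infty$, there is a unique $c^*$ with $v_1(c^*) = v_2(c^*)$. By monotonicity of $M$, $|I_1(c)| > |I_2(c)|$ for $c < c^*$ and $|I_1(c)| < |I_2(c)|$ for $c > c^*$.

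To conclude, combine this with layer cake: $\int(f_i-c)_+\,dx = \int_c^\infty |I_i(t)|\,dt$, and both densities integrating to $1$ give $\int_0^\infty |I_i(t)|\,dt = 1$. Hence $D(c) := \int_0^c (|I_1(t)| - |I_2(t)|)\,dt$ vanishes at $0$ and at $\infty$, and is increasing on $[0,c^*]$ and decreasing on $[c^*,\infty)$ by the single crossing, so $D(c) \ge 0$ for all $c > 0$. This is exactly condition~(B3), giving $X_1 \preceq X_2$. For the reverse implication, if $X_1 \preceq X_2$ but $\max f_1 > \max f_2$, any $c \in (\max f_2, \max f_1)$ yields $\int(f_1-c)_+ > 0 = \int(f_2-c)_+$, contradicting~(B3). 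Trichotomy on $s_1$ vs.\ $s_2$ (with $s_1 = s_2$ forcing $f_1 = f_2$) then shows that the orderings on $s_i$, on $\max f_i$, and on $X_i$ under $\preceq$ all coincide, giving the stated equivalence.

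The step I expect to be the main obstacle is establishing the single-crossing property cleanly; the key to making it elementary is the factorisation $f_i = C_i e^{s_i h(x)}$ with a mode-determined shape function $h$, which reduces the comparison of two-dimensional level-set geometries to the crossing of two log-affine functions of $c$ in a single variable.
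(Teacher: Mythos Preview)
Your proof is correct and takes a cleaner, more complete route than the paper's. The paper works in $x$-space: it computes the ratio $f_1/f_2$, shows it equals $1$ at exactly two points with a common value of $f_1=f_2$ there, and then appeals to the slice condition without further detail. You instead exploit the factorisation $f_i = C_i e^{s_i h}$ with a single mode-determined shape function $h$, which reduces the two-parameter Beta comparison to a one-parameter family indexed by the concentration $s$; the single crossing of the level-set measures $|I_i(c)|$ is then immediate from the log-affine form of $v_2-v_1$, and layer cake together with the unit-mass constraint deliver the slice inequality explicitly. Your argument also supplies the reverse implication and the full biconditional, which the paper leaves implicit.

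Two minor points worth tightening. First, the strict inequalities in your single-crossing statement become weak wherever one of the level sets is already empty (i.e.\ $v_i(c)<0$); this is harmless, since the weak inequalities still give $D'\ge 0$ on $[0,c^*]$ and $D'\le 0$ on $[c^*,\infty)$, hence $D\ge 0$. Second, your trichotomy tacitly uses that $s_1\ne s_2$ forces $\max f_1\ne\max f_2$. This follows from your own setup: if the maxima coincided with $s_1<s_2$, then $v_1(c)=s_1^{-1}\log(\max f/c)>s_2^{-1}\log(\max f/c)=v_2(c)$ for every $c$ below the common maximum, so $|I_1(c)|>|I_2(c)|$ throughout the range where either is positive, contradicting $\int_0^\infty|I_1|=\int_0^\infty|I_2|=1$.
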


\begin{proof} We first prove that, under the same mode condition, $f_1(x)$ and $f_2(x)$ intersect at two distinct $x$-values at which the values of $f_1(x)$ and $f_2(x)$ are the same. Setting the modes equal, 
$$ \frac{a_1-1}{a_1+b_1-2} = \frac{a_2-1}{a_2+b_2-2},$$
without loss of generality, we have that $a_2  > a_1$ and find
$$\frac{f_1(x)}{f_2(x)} = \left\{x (1-x)^u \right\}^v C, $$
where $u= a_2-a_1,v = \frac{b_1-1}{b_2-1}$ and $C$ is a constant. Setting this equal to 1, we have two solutions given by
$ x (1-x)^u = C^{-\frac{1}{v}}$. It is then straightforward to verify that the common value of $f_1(x)$ and $f_2(x)$ is the same at the two solutions. 
The proof is completed by using the slice condition in Equation \eqref{eq:slice}.
\end{proof}

Comparison of DR cdfs may use algebraic or numerical techniques. If the DR cdfs are polynomial, then comparison involves testing whether two increasing polynomials cross or one dominates the other over the union of the support of the distribution. Whether closed form characterisations of the DR are available for non-polynomial cdfs is outside the scope of this paper.

\section{Multivariate case: matching of uncertainty}\label{sec:multivariate}
 \begin{defn}\label{multitouni} 
A univariate decreasing rearrangement $\tilde{f}(z)$, compatible with $f(x)$, is, for all constants $c\geq 0$,
\begin{align}
\label{decreaseRearrangemult}
\int_{\{x:f(x)\geq c \}}f(x)dx=\int_{\{z:\tilde{f}(z)\geq c\}}\tilde{f}(z)dz.
\end{align}
\end{defn}
\begin{proof}
As
 \begin{align}
     {\{x:f(x)\geq c \}} = {\{z:\tilde{f}(z)\geq c\}},
\end{align}
then the volume of these sets are consistent \cite{Burchard2009}.
\end{proof}
This result induces a one dimensional DR from a multidimensional distribution. The following lemma is a key result and shows that the information/entropy for $X \sim f(x)$ and $Z \sim \tilde{f}(z)$ are the same. 

\begin{lem}
Let $f(x)$ be a multidimensional pdf and $\tilde{f}(z)$ on $[0, \infty]$ its decreasing rearrangement. Then, given a convex function $\varphi(x)$, we have
\begin{align}
\int_S \varphi(f(x))  dx = \int_0^{\infty} \varphi(\tilde{f}(z)) dz,
\end{align}
where $S$ is the support of $f(x)$.
\begin{proof} 
Matching volume to length elements in $S$ and $[0,1)$, for  $c>0$ and small $\delta c  > 0$ we have
$$ \int_{x: f(x) \geq c, x \in S} f(x)dx -  \int_{x: f(x) \geq c + \delta c, x \in S} f(x) dx =  \int_{z: \tilde{f}(z) \geq c, z \in [0, \infty)} \tilde{f}(z) dz  -\int_{z: \tilde{f}(z) \geq c +\delta c, z \in [0, \infty)} \tilde{f}(z) dz.$$
We can then write, approximately,
$$u(c)A(c, \delta c)=u(c)L(c, \delta c),$$
where $A(c, \delta c)$ and  $L(c, \delta c)$ are the corresponding increments in volume and length, respectively, as corresponding to the interval $[c,c+\delta c)$, that is
$ f^{(-1)}([c,c+ \delta))$ and $ \tilde{f}^{(-1)}([c,c+ \delta))$, respectively. Cancelling $c$, we can equate  $A(c, \delta c)$ and  $L(c, \delta c)$, and this allows us to recapture and equate the integrals of any measurable function $u(\cdot)$. In particular, we can write $u(c) = \varphi(f(c)).$
\end{proof}
\end{lem}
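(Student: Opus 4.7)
The plan is to reduce the identity to the \emph{equimeasurability} of $f$ and $\tilde{f}$ followed by an abstract change of variables. Definition~\ref{multitouni}, applied to the indicator $\mathbf{1}_{\{f\geq c\}}$ on one side and $\mathbf{1}_{\{\tilde{f}\geq c\}}$ on the other (as already invoked in the short proof just above the lemma), yields
\begin{equation*}
\mu\bigl\{x\in S : f(x)\geq c\bigr\} = \mu\bigl\{z\geq 0 : \tilde{f}(z)\geq c\bigr\}, \qquad c\geq 0.
\end{equation*}
Since the half-lines $\{[c,\infty) : c\geq 0\}$ form a generating $\pi$-system for the Borel $\sigma$-algebra on $[0,\infty)$, a standard $\pi$-$\lambda$ argument upgrades this equality of super-level measures to the full equality of pushforward measures $f_{\#}(dx\vert_S) = \tilde{f}_{\#}(dz)$ on $[0,\infty)$.

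Given this pushforward equality, the lemma drops out of the abstract change-of-variables formula: for any Borel $\varphi$ for which the integrals exist,
\begin{equation*}
\int_S \varphi(f(x))\,dx \;=\; \int_{[0,\infty)} \varphi\, d\bigl(f_{\#}dx\bigr) \;=\; \int_{[0,\infty)} \varphi\, d\bigl(\tilde{f}_{\#}dz\bigr) \;=\; \int_0^\infty \varphi(\tilde{f}(z))\,dz.
\end{equation*}
The author's differential-style argument is simply the layer-cake version of this: reduce to $\varphi\geq 0$ (a constant shift is harmless because the common super-level measure at $c=0$ matches on both sides), apply $\int \psi\, d\mu = \int_0^\infty \mu\{\psi>t\}\,dt$ to $\psi=\varphi\circ f$ and $\psi=\varphi\circ\tilde{f}$, and match the level sets $\{\varphi\circ f > t\} = f^{-1}(\varphi^{-1}((t,\infty)))$ and $\{\varphi\circ\tilde{f}>t\} = \tilde{f}^{-1}(\varphi^{-1}((t,\infty)))$ using the pushforward identity.

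The main obstacle is the measure-theoretic upgrade from the super-level-set identity of Definition~\ref{multitouni} to equality of pushforward measures; plateaus of $f$ (level sets of positive Lebesgue measure) are the only potentially delicate points, and they are dealt with by the usual convention of filling in jumps of the distribution function. Convexity of $\varphi$ plays no role in the proof beyond ensuring measurability — the statement is really about \emph{any} measurable transformation of equimeasurable functions, and the convex case is singled out only because it is the one relevant to the Schur-convex condition~(B2).
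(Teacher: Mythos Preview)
Your argument is correct and, at its core, rests on the same idea as the paper's: equimeasurability of $f$ and $\tilde f$ (equality of the Lebesgue measures of their super-level sets) implies that $\int \varphi\circ f = \int \varphi\circ\tilde f$ for any measurable $\varphi$. The paper reaches this by an informal ``infinitesimal layer'' argument, differencing the identity of Definition~\ref{multitouni} over $[c,c+\delta c)$ to conclude $A(c,\delta c)=L(c,\delta c)$ and then reassembling $\int u(f)\,dx$ from these layers; your version packages the same content measure-theoretically, promoting the super-level-set identity to equality of pushforward measures via a $\pi$--$\lambda$ argument and then invoking change of variables (or, equivalently, the layer-cake formula). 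What you gain is rigour---no appeal to ``approximately'' or to cancelling a factor of $c$ that need not be constant on the layer---and the explicit observation that convexity of $\varphi$ is irrelevant to the equality itself (it matters only downstream for the Schur-convex ordering~(B2)). What the paper's heuristic buys is a direct geometric picture of volume-to-length matching, which is the intuition underlying the multivariate-to-univariate DR construction used throughout Section~\ref{sec:multivariate}.
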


In Examples \ref{multi_norm_ex} and \ref{indep_exp_ex}, we demonstrate how to obtain a DR from a multivariate  distribution. The following idea is used to carry out computations: there may be cases in which, for a given $c$, the inverse set $f^{(-1)}(c)$ is described by some useful quantity $\delta$. Moreover $\delta$, expressed as a function of $x$, then becomes a random variable with a known (univariate) distribution. Since $\tilde{F}(\tilde{f}^{-1}(c) ) =F_{\delta}(f_{{X}}^{-1}(c) )$, Definition \ref{multitouni} can be expressed as
\begin{align} \label{eq:valid_DR}
\tilde{f}(r) & =  f_{\delta} \left( f_X^{(-1)}(\tilde{f}(r)) \right) \frac{\partial}{\partial r}\left(f_X^{(-1)}(\tilde{f}(r))\right).
\end{align}
 
\begin{example}\label{multi_norm_ex}
Let the random vector ${X}=(X_1, \dots, X_n)^T$ be an $n$-variate standard normal distribution with pdf
\begin{align}
f_{{X}}(x_1, \dots, x_n)=\frac{1}{(2\pi)^{\frac{n}{2}}}\exp\Big\{-\frac{1}{2}\sum_{i=1}^n x^2_i \Big\}.
\end{align}
We refer to ${X}$ as a spherical Gaussian random vector with ${X}\sim\text{N}_n({0}, I_n)$, where ${0}$ is an $n$-vector of zeros and $I_n$ is the $n\times n$ identity matrix. To construct the DR, we slice first the pdf at $f_{{X}}(x_1, \dots, x_n)=c$. We have that the square of the radius of a spherical Gaussian random vector is $R^2 = \sum_{i=1}^nX_i^2$, defining $r^2 =\sum_{i=1}^n x_i^2$, then
    \begin{align}
    r=\Big(-2\log\big((2\pi)^{n/2} c\big) \Big)^{1/2},
\end{align}
where the volume of the $n$-dimensional Euclidean ball of radius $r$ is
\begin{equation}
    \label{eq:ball_volume}
    V_n(r)=\frac{\pi^{n/2}}{\Gamma\Big(\frac{n}{2}+1 \Big)}r^n,
\end{equation}
from which we obtain
\begin{align}
c=\frac{1}{(2\pi)^{n/2}}\exp\bigg\{-\frac{1}{2}\bigg(\frac{V_n(r)\Gamma(n/2+1)}{\pi^{n/2}} \bigg)^{2/n} \bigg\},
\end{align}
noting the values of $c$ and $V_n(r)$ are dependent on each other. To generalise this expression, we replace $c$ and $V_n(r)$ with $\tilde{f}(z)$ and $z$, respectively. The resulting form of the DR is
\begin{equation}
\label{eq:DRM_mult_normal}
\tilde{f}(z) = \frac{1}{(2\pi)^{n/2}}\exp\bigg\{-\frac{1}{2}\bigg(\frac{z}{V_n} \bigg)^{2/n} \bigg\},
\end{equation}
where $V_n$ is the volume of the unit sphere in $\mathbb{R}^n$. For the two-dimensional multivariate normal, we illustrate the construction of the univariate DR in Figure \ref{fig:DRM2}.

We can validate the form of the DR in Equation (\ref{eq:DRM_mult_normal}) by the construction in Equation (\ref{eq:valid_DR}), where $R^2=\sum_{i=1}^n X_i^2$ follows a Chi-squared distribution with $n$ degrees of freedom.

\begin{figure}[h!]
\begin{center}
\includegraphics[height=0.2\textheight]{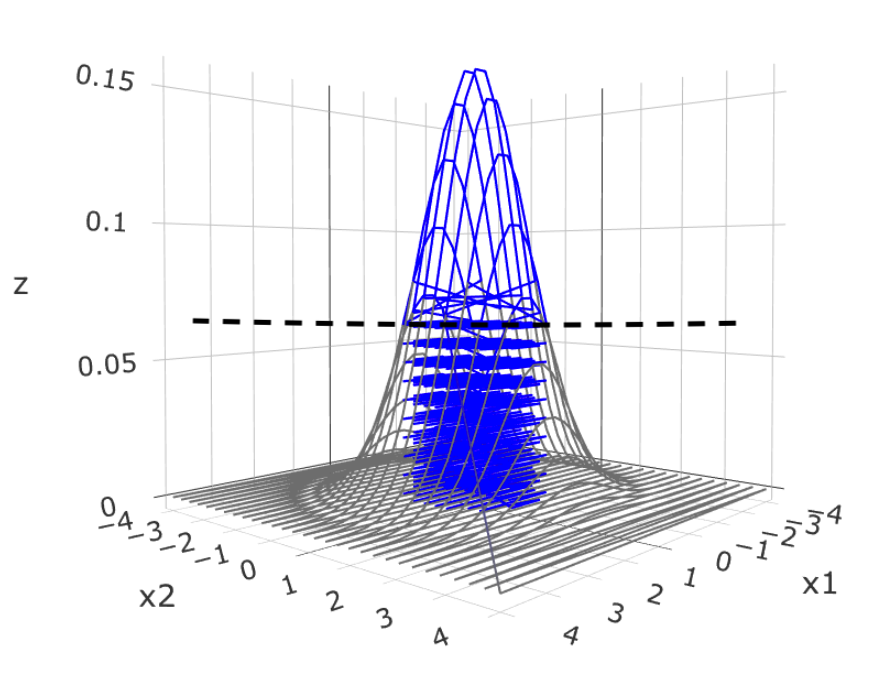}\includegraphics[height=0.2\textheight]{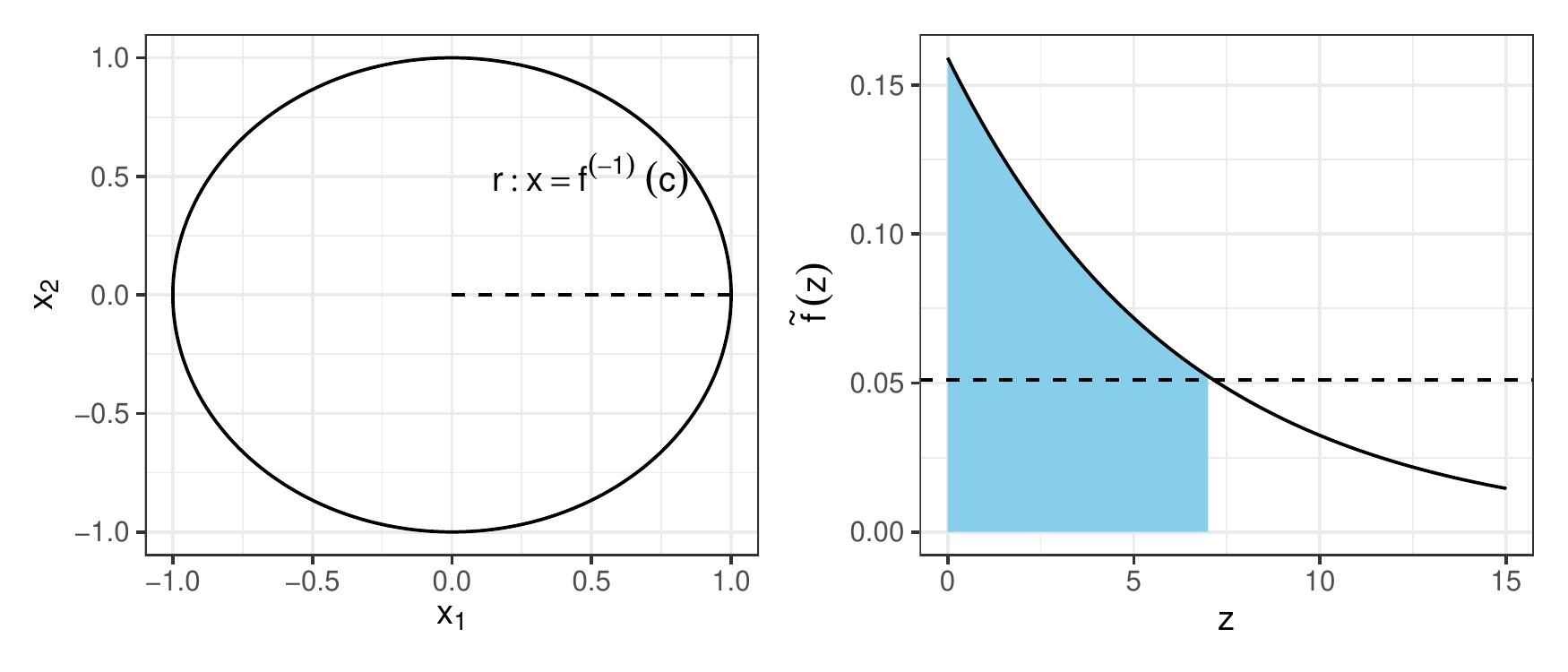}
 \end{center}
\caption{Example \ref{multi_norm_ex}. \textit{Left panel:} Density plot of a two-dimensional standard multivariate normal. The dashed line and blue shaded region correspond to $f(x)=c$ and $\int_{\{x:f(x)\geq c \}}f(x)dx$, respectively. 
\textit{Central panel:} A plot to demonstrate the connection between ${X}=(X_1, X_2)^T$ and $R^2$. The radius $r$ of a blue circle corresponds to $x=f^{-1}(c)$.
\textit{Right panel:} The DR $\tilde{f}(z)$ obtained for the multivariate normal. The blue shaded region corresponds to $\int_{\{z: \tilde{f}(z)\geq c\}}\tilde{f}(z) dz$.}
\label{fig:DRM2}
\end{figure}
 
 \end{example}

\begin{example}\label{indep_exp_ex}
Consider the $n$-fold independent standard exponential distribution with pdf
 \begin{align}
f_X(x_1, \dots, x_n)=\exp\left\{-\sum_{i=1}^n x_i \right\}.
\end{align}
As $f_X(x_1, \dots, x_n) = f_1(x_1) f_2(x_2) \cdots f_n(x_n)$, slicing the pdf at $c=f_{{X}}(x_1, \dots, x_n)$ yields  $-\log(c)=\sum_{i=1}^n x_i$.
The volume of an $n$-dimensional simplex  in which all $n$ variables are greater than $0$ but with sum less than $R$ is $V_n =  R^n/n!$, then $ c=\exp\left\{-(n!V_n)^{1/n} \right\}$. Replacing $c$ and $V_n$ with $\tilde{f}(z)$ and $z$, respectively, the DR can be written as
\begin{equation}
\label{eq:DRM_mult_exp}
\tilde{f}(z)=\exp\left\{-(n!z)^{1/n} \right\}.
\end{equation}
To verify the form of the DR in Equation (\ref{eq:DRM_mult_exp}), we can use Equation (\ref{eq:valid_DR}) with the relationship $\delta$ given by $R=\sum_{i=1}^n X_i$, such that $R\sim\text{Gamma}(n, 1)$.
\end{example}

\begin{example}\label{example_multi}
Figure \ref{fig:Plot3} shows the DR cdfs for standard normal and exponential distributions with $n=1, 2, 3, 4$. We conclude that for  $X\in\mathbb{R}^n$ and $Y\in\mathbb{R}^m$, where $X\sim f_X$ and $Y\sim f_Y$, and $f_X$ and $f_Y$  come from the same family of multivariate densities, standard normal or standard exponential, when $n>m$, $f_X\preceq f_Y$. We observe that univariate distributions majorise the remaining cdfs, which implies that adding more random variables drives the uncertainty up.  
\end{example}

\begin{figure}[h!]
\begin{center}
\includegraphics[width = 0.8\textwidth]{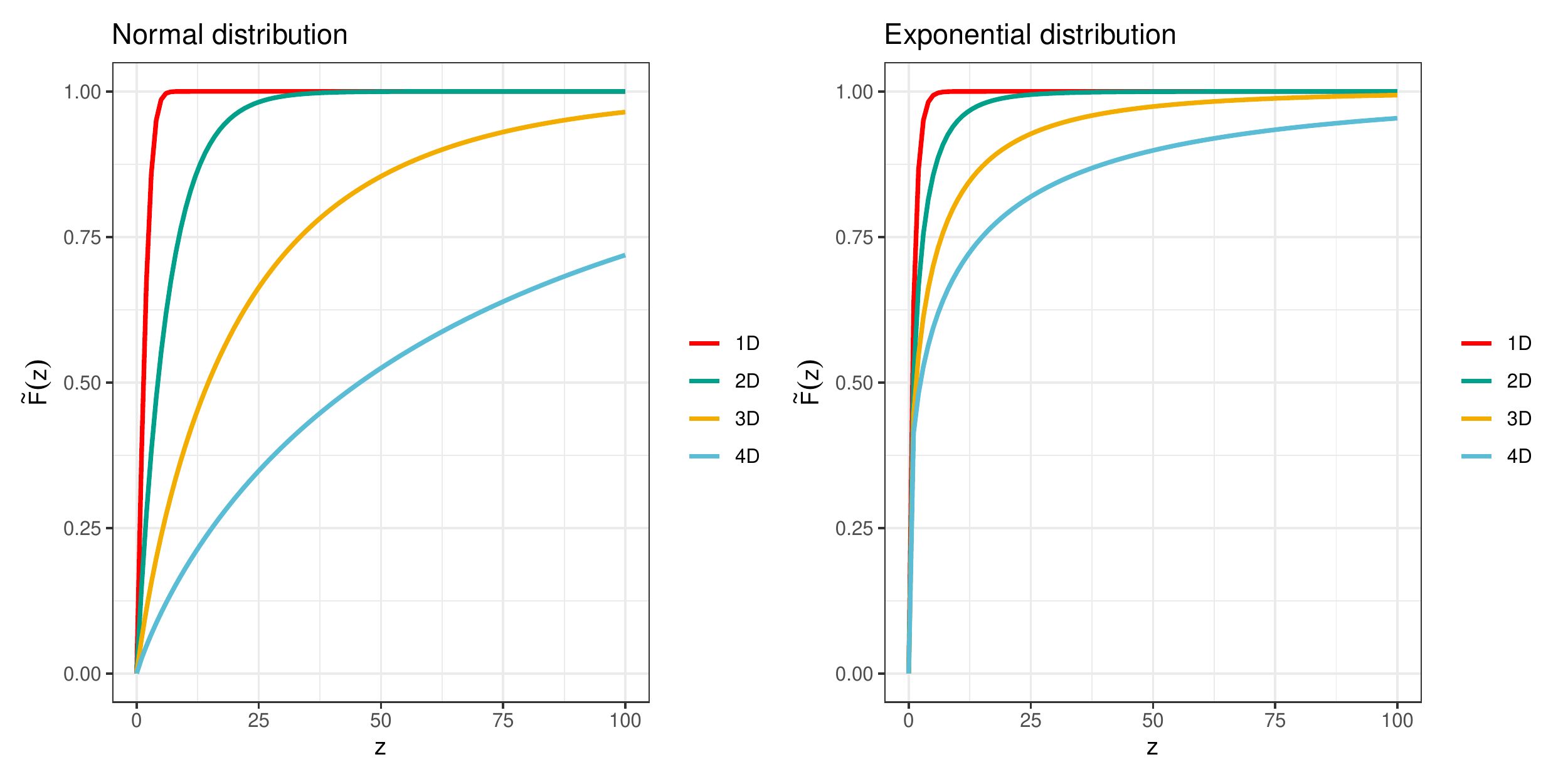}
\caption{Example \ref{example_multi}. \textit{Left panel:} $\tilde{F}(z)$ for the standard normal with $n=1,2,3,4$. \textit{Right panel:}  $\tilde{F}(z)$ for the independent standard exponential distribution with $n=1,2,3,4$.}\label{fig:Plot3}
\end{center}
\end{figure}

\section{Some operations with $\preceq$}\label{sec:operations}
We present operations for combining the uncertainty of two distributions and discuss the effect of dependence and the area of support of the distributions on the ordering. In Section \ref{subsec:InverseMix}, we introduce inverse mixing for discrete and continuous probability distributions. In Section \ref{subsec:dependece_order}, we discuss the use of majorisation ordering as an ordering of dependence for  distributions with the same marginal densities. In Section \ref{subsec:volume}, we present a volume-contractive mapping for a random variable which leads to a reduction in uncertainty.

\subsection{Inverse Mixing}
\label{subsec:InverseMix}
\begin{defn}\label{def:inversemixing}
Inverse mixing is defined by
\begin{align}
\tilde{f}_1\; [+] \;\tilde{f}_2=
\left(\tilde{f}_1^{(-1)}(z)+\tilde{f}_2^{(-1)}(z) \right)^{(-1)},
\end{align}
and $\alpha$-inverse mixing is defined by
\begin{align}
\tilde{f}_1\; [+]_{\alpha}\; \tilde{f}_2=\left(\tilde{f}_1^{(-1)}\left(\frac{z}{1-\alpha}\right) + \tilde{f}_2^{(-1)} \left(\frac{z}{\alpha}\right)\right)^{(-1)},
\end{align}
where  $0 < \alpha < 1$ is the mixing parameter.
\end{defn}

Inverse mixing is a method for combining uncertainty given two distributions over two different populations. We note that, when performing inverse mixing, the supports (atoms in the discrete case) can be different.
We demonstrate inverse mixing for the discrete and continuous distributions. For the continuous distribution, we consider cases in which the maximum values (modes) occur (i) at the same point, and (ii) at different points. 

\begin{example}\label{exampleworkplace}
Consider two distinct groups of people in a workplace. Define the probability of the $i^\text{th}$ member of group one obtaining a promotion by $p_i$ and, correspondingly, by $q_i$ for group two. Let  
$p = (0.577,\  0.192,\ 0.128, $ $ \ 0.064, \ 0.038)$ and $q = (0.730 ,\ 0.219, \ 0.036,\ 0.007,\ 0.007)$,  noting that 
$	p_1\geq p_2\geq\cdots\geq p_5$, $q_1\geq q_2\geq \cdots q_5$ and
$\sum_{i}p_i=1$, $\sum_{i}q_i=1$. 
To perform inverse mixing with $\alpha = \frac{1}{2}$, we take the inverse of each pmf, combine them and then sort them into ascending order (Figure \ref{fig:inversetilt}, left panel). The inverse is then taken to obtain a pmf (central panel). The inverse mixing procedure combines all of the probabilities scaled by a factor $\frac{1}{2}$, i.e. $\frac{1}{2} p_i \ \cup \ \frac{1}{2} q_i,$ and sorts them in decreasing order. The direct mixing procedure is obtained by summing the ordered probabilities of two populations and scaling them by a factor $\alpha$, i.e. $\frac{1}{2}(p_i+q_i)$ (right panel). Whilst both mixings provide information about the joint population, the inverse mixing also preserves information about the individual subpopulations.
\end{example}

\begin{figure}[h!]
\begin{center}
\includegraphics[width=.96\textwidth]{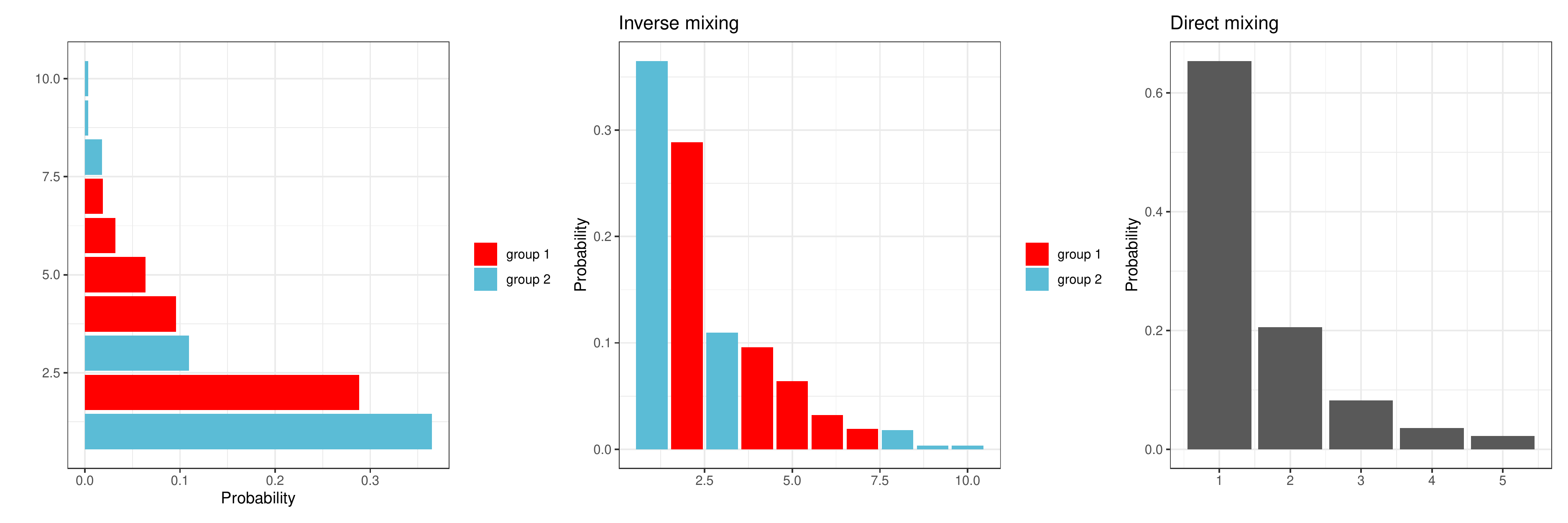}
\vspace{-2em}
\end{center}
\caption{Example \ref{exampleworkplace}. \textit{Left panel:} The addition of the two inverse pmfs, \textit{central panel:} inverse mixing distribution with $\alpha=\frac{1}{2}$, \textit{right panel:} direct mixing distribution with $\alpha=\frac{1}{2}$.
}
\label{fig:inversetilt}
\end{figure}

\begin{example}\label{example:unibi_exp}
For the univariate and bivariate exponential distributions, from Equation \eqref{eq:DRM_mult_exp} we have 
\begin{align}
\tilde{f}_1(z)=\exp\{-z\}, \quad \tilde{f}_2(z)=\exp\{-(2z)^{1/2}\}.
\end{align}
Observing that $0<\tilde{f}_1(z), \tilde{f}_2(z)\leq 1$, we have functional inverses, 
\begin{align}
\tilde{f}_1^{(-1)}(z) = -\log(z), \quad \tilde{f}_2^{(-1)}(z)=\frac{1}{2}(\log(z))^2, \quad z\in(0, 1],
\end{align}
and we illustrate these in Figure \ref{fig:inverse_mixing_con1} (left and central panels). The inverse mixing of the two distributions is 
\begin{align}
f^{(1)}(z)  = \left(-\log\Big(\frac{z}{1-\alpha}\Big) +\frac{1}{2} \Big(-\log\Big(\frac{z}{\alpha}\Big) \Big)^2 \right)^{(-1)} ,
\end{align}
for $0 \leq \alpha \leq 1$, and as the maximum values of the distribution functions occur at the $z=0$, the curve is smooth. The direct averaging of $f_1(x)$ and $f_2(x)$ is
\begin{align}
f^{(2)}(z) = \left((\alpha-1)\log(z)+\frac{\alpha}{2}(-\log(z))^2 \right)^{(-1)}.
\end{align}
For $\alpha=1/2$ we have
\begin{align}
f^{(1)}(z)=\Big(-\log(2z)+\frac{1}{2}\big(\log(2z)\big)^2 \Big)^{(-1)},
\end{align}
which is a quadratic in $\log(2z)$, so we obtain the two solutions $f^{(1)}(z)=\frac{1}{2}\exp\{1+\sqrt{1+2z}\}$ and $f^{(1)}(z)=\frac{1}{2}\exp\{1-\sqrt{1+2z}\}$. As the first solution does not integrate to one, we retain the second solution. The values of the mean, variance and Shannon entropy for $f^{(1)}(z)$ are 
$\left [\frac{7}{2}, \ \frac{99}{4}, \ \frac{3}{2}+\log(2)  \right]$, respectively. For the direct mixing, with  $\alpha=1/2$ and pdf $f^{(2)}(z)=\exp\left\{1-\sqrt{1+4z}\right\}$, 
the corresponding values are 
$\left [\frac{7}{4}, \ \frac{99}{16}, \ \frac{3}{2}\right]$. We observe that both the variance and the Shannon entropy are greater for $f^{(1)}(z)$ than for $f^{(2)}(z)$, which indicates that there is more uncertainty attributed to $f^{(1)}(z)$ than to $f^{(2)}(z)$. We confirm this finding with the comparison plot of inverse mixing and direct averaging in Figure \ref{fig:inverse_mixing_con1} (right panel). We have the relationship $f^{(2)}(z)=2f^{(1)}(2z)$, and can see $f^{(1)}(z)$ (red line) stretches the support of the distributions, and lowers the overall maximum, whereas $f^{(2)}(z)$ (blue line) preserves the maximum and shrinks the support.
\begin{figure}[h!]
\begin{center}
\includegraphics[width=1\textwidth]{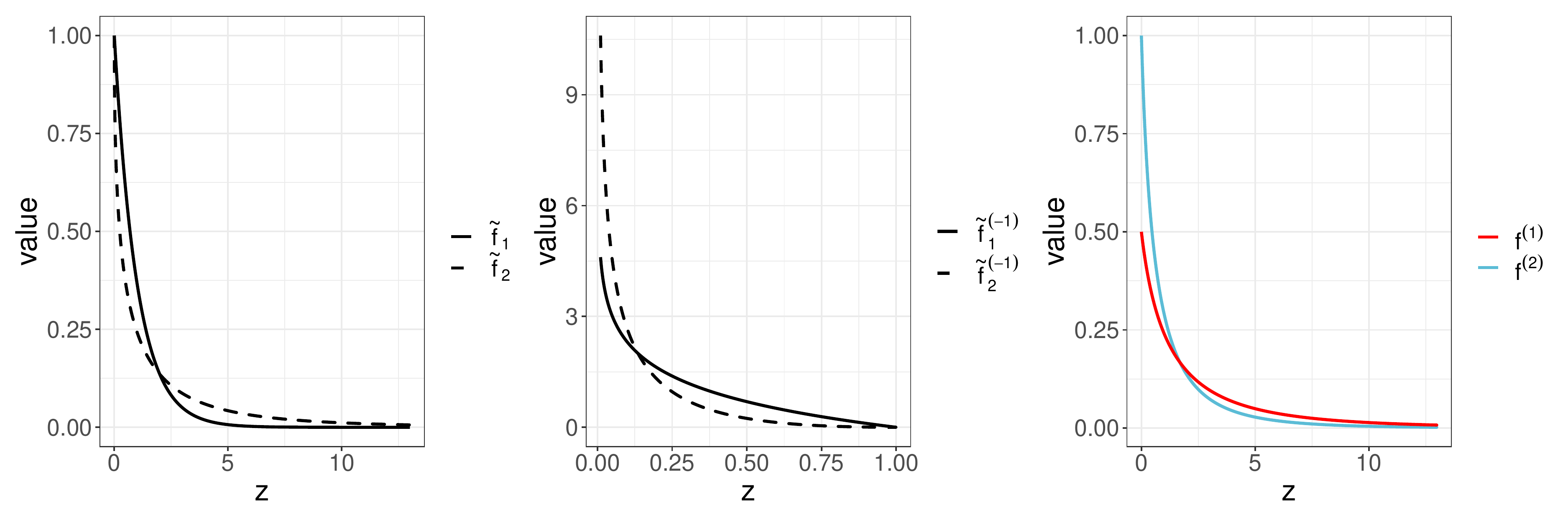}
\vspace{-2em}
\end{center}
\caption{\textit{Left panel:} DR functions $\tilde{f}_1(z)$ (solid line) and $\tilde{f}_2(z)$ (dashed line). \textit{Central panel:} functional inverses of the DR functions: $\tilde{f}_1^{(-1)}(z)$ (solid line) and $\tilde{f}_2^{(-1)}(z)$ (dashed line). \textit{Right panel:} inverse mixing and direct averaging: $f^{(1)}(z)$ (red line) and $f^{(2)}(z)$ (blue line).}
\label{fig:inverse_mixing_con1}
\end{figure}
\end{example}

\begin{example} \label{example:6}  
Consider exponential distributions with means 1 and 2, then the pdfs are already DRs,
\begin{align}
\tilde{f}_1(z)=\exp\{-z\}, \quad\tilde{f}_2(z)=\frac{1}{2}\exp\{-\frac{z}{2}\},
\end{align}
as illustrated in Figure \ref{fig:inverse_mixing_con2final} (left panel). Since
 $0<\tilde{f}_1(z)\leq 1$ and $0<\tilde{f}_2(z)\leq 1/2$, the functional inverses have different support:
\begin{align}
\tilde{f}_1^{(-1)}(z)=-\log(z), \quad z\in (0, 1], \quad \text{and} \quad  \tilde{f}_2^{(-1)}(z)=-2\log(2z), \quad z\in (0, 1/2].
\end{align}
For $\alpha=1/2$, the inverse mixing is
\begin{align}
f^{(1)}(z)=\tilde{f}_1\; [+]_{\frac{1}{2}}\; \tilde{f}_2=\left(-\log(2z)-2\log(4z) \right)^{(-1)}.
\end{align}
To avoid negative values of the expression inside the functional inverse, we propose the following modification:
\begin{align}
\tilde{f}_1^{(-1)}(2z)+\tilde{f}_2^{(-1)}(2z) =\max\{0, -\log(2z)\}+\max\{0, -2\log(4z)\},
\end{align}
illustrated in 
Figure \ref{fig:inverse_mixing_con2final} (central plot) which results in a kink at $z=0.25$. We take another functional inverse to obtain the inverse mixing, 
\begin{align}
f^{(1)}(z)&=\begin{cases}
\frac{1}{2}\exp\{-z\}, &\mbox{if } 0<z<\log(2) ,\\
\frac{1}{2}\exp\{ \frac{-2\log(2)-z}{3} \}, &\mbox{if } z \geq \log(2),
\end{cases}
\end{align}
and, as illustrated in Figure \ref{fig:inverse_mixing_con2final} (right panel), we observe a kink at $z=\log(2)$. 
For $\alpha=1/2$, the direct averaging of these distributions is
\begin{align}
f^{(2)}(z)=\left(-\frac{1}{2}\log(z)-\log(2z) \right)^{(-1)}.
\end{align}
To avoid negative values, we can write
\begin{align}
\frac{1}{2}\tilde{f}_1^{(-1)}(z)+\frac{1}{2}\tilde{f}_2^{(-1)}(z)=\max\left\{ 0,-\frac{1}{2}\log(z)\right\}+\max\left\{0, -\log(2z) \right\},
\end{align}
 illustrated by the solid line of Figure \ref{fig:inverse_mixing_con2final} (central plot), noting a kink at $z=\frac{1}{2}$. As a result, we obtain a kink in $f^{(2)}(z)$ at $z=-\frac{1}{2}\log{\frac{1}{2}}$ in Figure \ref{fig:inverse_mixing_con2final} (right panel). The direct averaging is
\begin{align}
f^{(2)}(z)
&=\begin{cases}
\exp\{-2z\}, &\mbox{if } 0<z<-\frac{1}{2}\log(\frac{1}{2}), \\
\exp\Big\{\frac{-2z-2\log(2)}{3}\Big\}, &\mbox{if } z\geq -\frac{1}{2} \log(\frac{1}{2}).
\end{cases}
\end{align}

The values of the mean, variance and Shannon entropy for $f^{(1)}(z)$ are 
$\left [ 2.85,  \ 8.91, \ 1 +  \frac{3}{2}\log(2)\right]$, respectively,
and corresponding values for $f^{(2)}(z)$ are 
$\left [1.42, \ 2.23, \ 1+\frac{1}{2}\log(2)\right]$. From the representation of inverse mixing and direct averaging in Figure \ref{fig:inverse_mixing_con2final}, we can see that $f^{(1)}(z)$ stretches the support of the distribution, whilst $f^{(2)}(z)$ shrinks it. The maximum (mode) from the direct averaging is double the maximum of the inverse mixing.

\begin{figure}[h!]
\begin{center}
\includegraphics[width=1\textwidth]{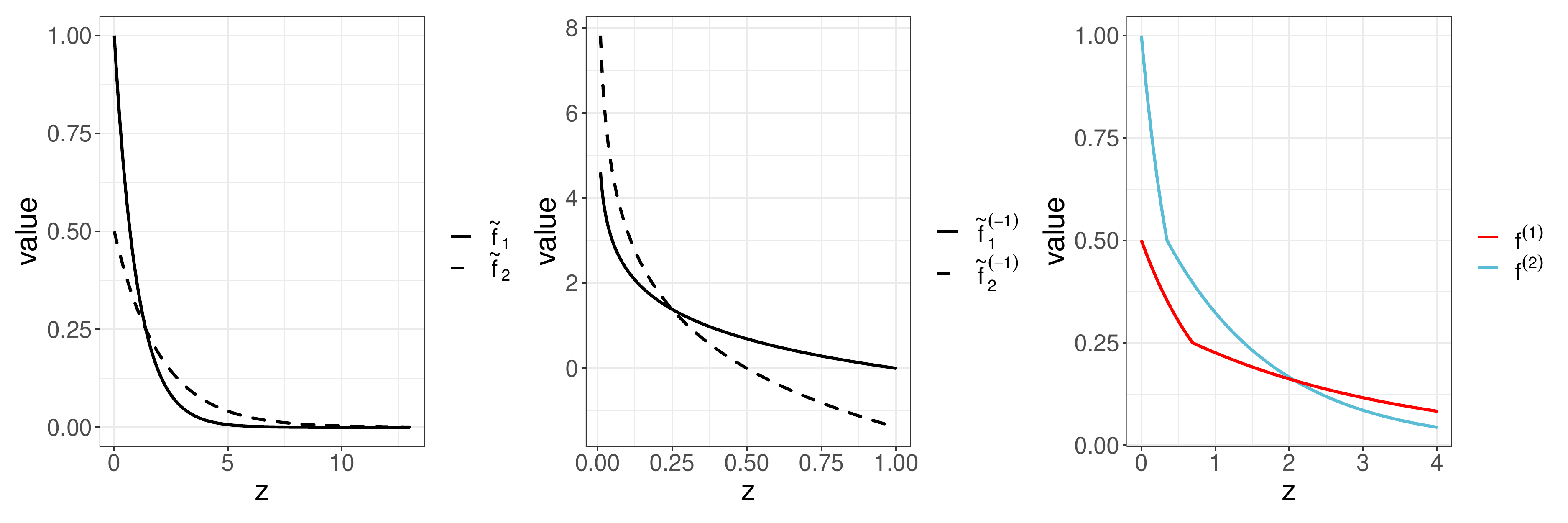}
\vspace{-2em}
\end{center}
\caption{\textit{Left panel:} DR functions $\tilde{f}_1(z)$ (solid line) and $\tilde{f}_2(z)$ (dashed line). \textit{Central panel:} functional inverses of the DR functions: $\tilde{f}_1^{(-1)}(z)$ (solid line) and $\tilde{f}_2^{(-1)}(z)$ (dashed line). \textit{Right panel:} inverse mixing and direct averaging: $f^{(1)}(z)$ (red line) and $f^{(2)}(z)$ (blue line).}
\label{fig:inverse_mixing_con2final}
\end{figure}
\end{example}

\subsection{Dependence ordering}
\label{subsec:dependece_order}
We briefly discuss how to obtain the ordering of dependence as well as measures of dependence using majorisation and entropy, on which Joe \cite{Joe1985,Joe1987} has written extensively.

\begin{defn}  Let $X=(x_{ij})$ be an $m\times n$ matrix. Place the $mn$ entries of $X$ in decreasing (i.e., nonincreasing) order, to generate an $mn$-dimensional vector $X^*$. It is then said that $X$ majorises $Y$, written $Y\preceq X$, when $Y^*\preceq X^*$.
\end{defn}

\begin{defn}  Let $X$ be a matrix with entries in $S\subset \mathbb{R}$. $X$ is a minimal matrix if, for any other matrix $Y$, with entries in $S$ such that $Y\preceq X$, then $X^*=Y^*$.
\end{defn}

When the matrix entries represent probabilities of discrete bivariate distributions, then the majorisation ordering can be interpreted as an ordering of dependence, e.g., $X$ represents more ``dependence'' than $Y$ if $Y\preceq X$. Similarly to the discrete 
majorisation presented in Section \ref{sec:discrete}, condition (A2) holds for matrix majorisation.
From Joe \cite[Theorem 7]{Joe1985}, $X$ is a minimal matrix if it maximises $h(\cdot)$ subject to maintaining the row and column sums. We demonstrate that the minimal matrix corresponds to the bivariate distribution of two independent random variables for specific types of entropy.

\begin{example}\label{ex:entropy}
Let $X_1$ and $X_2$ be two independent random variables with $P(X_1=0)=\alpha$ and $P(X_2=0)=\beta$. We compute the joint probabilities: 
\begin{align}
p_{00} = \alpha \beta,\; p_{10} = (1-\alpha)\beta,\; p_{01} = \alpha (1-\beta),\; p_{11} = (1-\alpha)(1-\beta).
\end{align}
We can generate all binary distributions with the same margins as $X_1,X_2$ with a perturbation $\epsilon$:
\begin{align}
p_{00} = \alpha \beta + \epsilon,\; p_{10} = (1-\alpha)\beta-\epsilon,\; p_{01} = \alpha (1-\beta)-\epsilon,\; p_{11} = (1-\alpha)(1-\beta)+\epsilon,
\end{align}
with the restriction that $|\epsilon| < \min(p_{00}, p_{10}, p_{01}, p_{11}).$ If $\epsilon=0$, we retain the independence case. We compute the Shannon entropy, $ H(X_1, X_2)=-\sum_{i, j}p_{ij}\log(p_{ij})$, and by the maximum entropy principle, we have
\begin{align}
\frac{\partial H(X_1, X_2)}{\partial \epsilon}=\log\Bigg(\frac{\big(\alpha (1-\beta)-\epsilon\big)\big((1-\alpha)\beta-\epsilon\big)}{(\alpha\beta+\epsilon)\big((1-\alpha)(1-\beta)+\epsilon\big)} \Bigg).
\end{align}
Setting $\frac{\partial H(X_1, X_2)}{\partial \epsilon}=0$, we find $\epsilon=0$ and conclude that the Shannon entropy is at its maximum in the independence case.
We also compute the Tsallis entropy, $ H(X_1, X_2)=\sum_{i, j=}p_{ij}(1-p_{ij})$ with $\gamma=1$, 
\begin{align}
    H(X_1, X_2)    &=1-(\alpha\beta+\epsilon)^2-\big((1-\alpha)\beta-\epsilon \big)^2-\big(\alpha(1-\beta)-\epsilon \big)^2-\big((1-\alpha)(1-\beta)+\epsilon \big)^2.
\end{align}
As before we follow the maximum entropy principle to derive that the Tsallis entropy with $\gamma=1$ is at its maximum when $\epsilon = - \frac{1}{4}(2\beta-1)(2\alpha-1)$. We note that  $\epsilon$ is zero, the independence case, if at least one of $\alpha$ or $\beta$ is $\frac{1}{2}$. We observe that the maximum value of the Tsallis entropy could be obtained for $\epsilon\neq 0$ (dependence case). We conclude that the  independence case cannot be uniformly dominated within the ordering $\preceq$. 
\end{example}

Similar results hold for continuous multivariate densities. For two distributions with pdfs $f_1$ and $f_2$, where $f_1\preceq f_2$, this implies $f_2$ represents more ``dependence'' than $f_1$. In addition, the relative entropy function can be used to measure the dependence of distribution.

Joe \cite{Joe1987} introduced the concept of dependence parameters to construct the ordering of multivariate distributions. Let $a(\theta)$ represent a dependence parameter for a family of densities $f_{\theta}$ and $f_{\theta}\preceq f_{\theta'}$, if $a(\theta)\leq a(\theta')$ (or $f_{\theta}\preceq f_{\theta'}$, if $a(\theta')\leq a(\theta)$). For example, for zero-mean multivariate normal densities $f_{\Sigma_1}$ and $f_{\Sigma_2}$ parameterised by variance-covariance matrices $\Sigma_1$ and $\Sigma_2$ respectively, we have $f_{\Sigma_1}\preceq f_{\Sigma_2}$, if $|\Sigma_1|>|\Sigma_2|$. We are interested in demonstrating that the ordering imposed by the dependence parameters holds by using a DR introduced in Section \ref{sec:multivariate}. We derive the $DR$, $\tilde{f}(z)$, for $X\sim\text{MVN}(\bb{0}, \Sigma)$ with $\Sigma=\text{diag}\{\sigma^2, \dots, \sigma^2 \}$, that is
\begin{equation}
    \tilde{f}(z)=\frac{1}{(2\pi)^{n/2}|\Sigma|^{1/2}}\exp\Bigg\{-\frac{1}{2\sigma^2}\Big(\frac{z}{V_n} \Big)^{2/n} \Bigg\}.
\end{equation}

\begin{example} \label{example:binorm}
For bivariate normal densities $X\sim\text{MVN}(\bb{0}, \Sigma_1)$ and $Y\sim\text{MVN}(\bb{0}, \Sigma_2)$ with $\Sigma_1=\text{diag}(1, 1)$ and $\Sigma_2=\text{diag}(3, 3)$, we find DR cdfs
\begin{align}
\tilde{F}_1(z)=1-\exp\Big\{-\frac{z}{2\pi} \Big\}, \quad \tilde{F}_2(z)=1-\exp\Big\{-\frac{z}{6\pi} \Big\}.
\end{align}
Figure \ref{fig:PlotComp} shows that $\tilde{F}_2(z)\preceq \tilde{F}_1(z)$, since $|\Sigma_2|>|\Sigma_1|$, which is in line with results shown by Joe \cite{Joe1987}.

\begin{figure}[h!]
\begin{center}
\includegraphics[width = 0.37\textwidth]{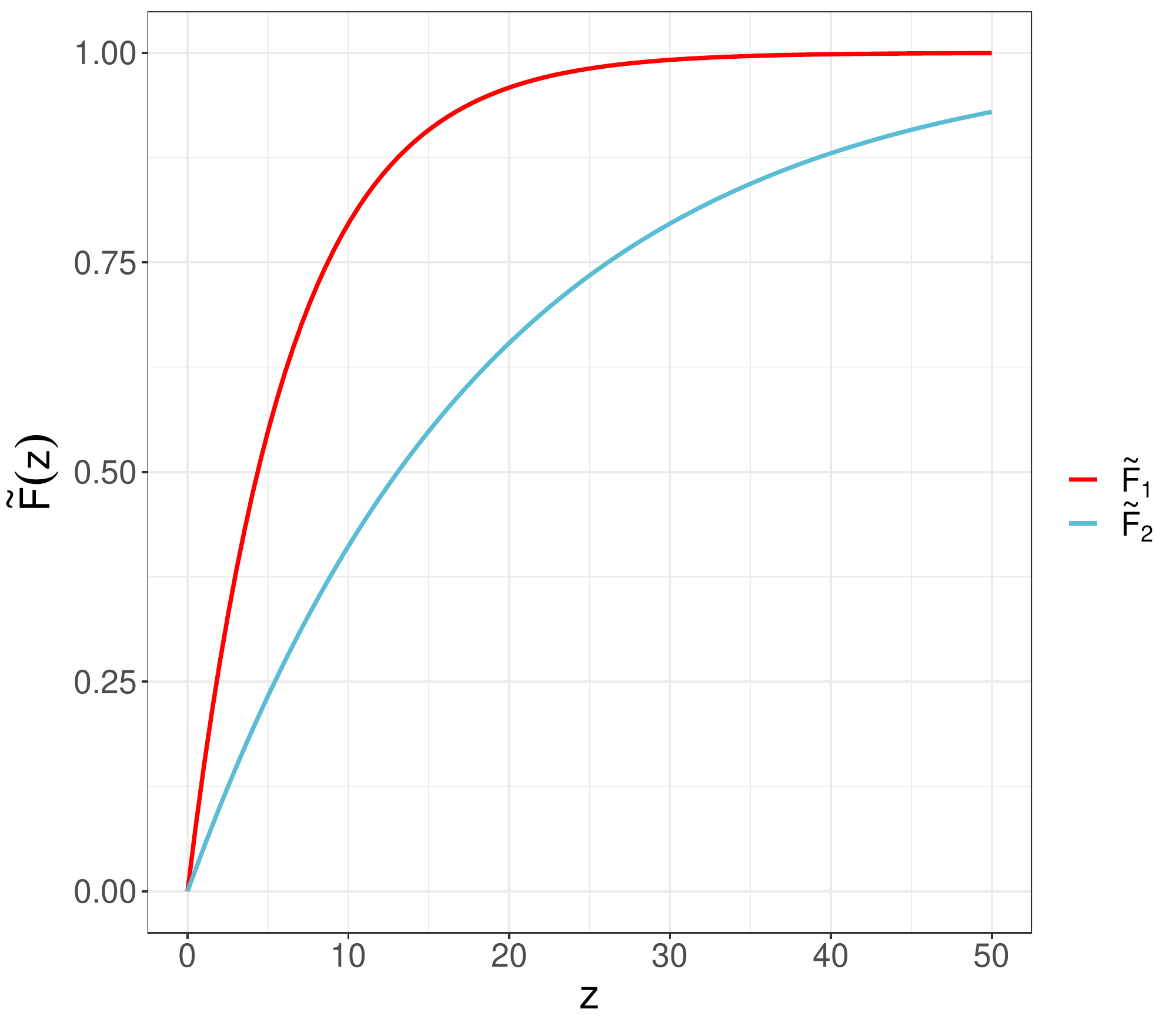}
\caption{Example \ref{example:binorm}. DR cdfs  $\tilde{F}_1(z)$ and $\tilde{F}_2(z)$.}\label{fig:PlotComp}
\end{center} 
\end{figure}
\end{example}

To obtain the orderings for multivariate normal distributions with non-zero off-diagonal entries in variance-covariance matrices, we could use empirical DRs discussed in Section \ref{sec:empirical}.

\subsection{Volume-contractive mappings}
\label{subsec:volume}
The area of the support is a key component of studying $\preceq$. For example, in the discrete case, if  $p= (p_1, p_2, p_3)$ are our probabilities with $p_1+p_2+p_3 = 1$, we have support of size 3. Splitting $p_3$ to form $q = (p_1, p_2, \frac{p_3}{2}, \frac{p_3}{2})$ has support of size 4 and we can conclude that $q \preceq p$. In the continuous case, we may refer to such an operation as dilation: locally, we have the same amount of density but with stretched support. Dilation in the continuous case is obtained via a transformation of the random variable, whose inverse we can call contractive. We proceed to demonstrate that the volume-contractive mapping implies a decrease in uncertainty.  

\begin{defn}
A differentiable and invertible function $ h:  \mathbb R \rightarrow \mathbb R$, $y = h(x)$, is a volume-contractive mapping if the absolute value of the Jacobian determinant: $J = \vline \frac{\partial y}{\partial x} \vline$ satisfies $ 0 < J \leq 1$ for $x \in \mathbb R$.
\end{defn}

\begin{lemma}
If $h:\mathbb R \rightarrow \mathbb R$ is a volume-contractive mapping, then, for any random variable $X \sim f_X(x)$, it holds that
$X \preceq Y = h(X)$.  
\end{lemma}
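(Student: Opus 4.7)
The plan is to apply the slice condition (B3), which is the most convenient of the equivalent characterisations because it operates directly on the pdfs rather than requiring an explicit computation of the DRs.

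First, I would write down the density of $Y = h(X)$ via the standard change-of-variables formula. Since $h$ is differentiable and invertible with nonvanishing Jacobian, we have
\begin{equation*}
f_Y(y) = \frac{f_X(h^{-1}(y))}{J(h^{-1}(y))},
\end{equation*}
where $J(x) = |\partial h / \partial x|$. The key qualitative fact is that $J \leq 1$ forces $f_Y(h(x)) \geq f_X(x)$ pointwise, which matches the intuition that a volume-contractive map squeezes mass into a smaller region and thereby makes the density more concentrated, i.e.\ less uncertain.

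Next, I would verify the slice condition $\int(f_X(x)-c)_+\,dx \leq \int(f_Y(y)-c)_+\,dy$ for every $c > 0$. On the right-hand side, performing the substitution $y = h(x)$, $dy = J(x)\,dx$ gives
\begin{equation*}
\int (f_Y(y) - c)_+\,dy = \int \Bigl(\tfrac{f_X(x)}{J(x)} - c\Bigr)_+ J(x)\,dx = \int (f_X(x) - c\,J(x))_+\,dx.
\end{equation*}
Since $0 < J(x) \leq 1$, we have $c\,J(x) \leq c$, so $(f_X(x) - c\,J(x))_+ \geq (f_X(x) - c)_+$ for every $x$. Integrating this pointwise inequality yields the required slice inequality.

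Finally, since the quantity $\int(f-c)_+\,dx$ depends only on the distribution of the values of $f$ (equivalently, only on the DR), the slice condition on the pdfs $f_X$ and $f_Y$ is exactly the condition (B3) on their decreasing rearrangements, and hence gives $X \preceq Y$. There is no real obstacle in this argument; the only subtle point is justifying that (B3) can be applied in the form written directly for $f_X$ and $f_Y$, which follows from the layer-cake identity $\int(f-c)_+\,dx = \int_c^\infty \mu\{f \geq t\}\,dt$ together with the measure-preserving property of the DR stated in Definition~\ref{drdefn}.
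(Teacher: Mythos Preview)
Your proof is correct and, like the paper's, is built on a slice criterion combined with the change of variables $y=h(x)$. The paper, however, argues via the quantity $\operatorname{prob}\{f_X(X)\geq c\}$ rather than $\int(f_X-c)_+\,dx$, and it further restricts to invertible (monotone) densities and effectively treats the Jacobian $J$ as a constant, reducing the claim to the trivial inequality $c\geq Jc$. Your direct use of (B3) avoids these extra assumptions: the substitution $y=h(x)$ converts the right-hand side exactly into $\int(f_X(x)-cJ(x))_+\,dx$, and the pointwise bound $(f_X-cJ)_+\geq(f_X-c)_+$ handles a variable Jacobian $J(x)$ with no further hypothesis. So while the core idea is shared, your execution is cleaner and strictly more general than the argument in the paper.
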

\begin{proof} We give a proof for the one dimensional case only and, in addition, assume $f_X(x)$ and $f_Y(y)$ are invertible. Using the slice condition, we want to show that
$$\mbox{prob}_X \{ f_X(X) \geq c\} \geq  \mbox{prob}_Y\{ f_Y(Y) \geq c\}. $$
Developing the left hand side, we see that
\begin{eqnarray*}
\{ f_X (X) \geq c \}  \Leftrightarrow  \{X \geq f_X^{(-1)}(c) \} 
                               \Leftrightarrow  \{Y \geq h(f_X^{(-1)}(c)) \}.
\end{eqnarray*}

Computing the density of $Y$ as
$$ f_Y(y) = \frac{1}{J} f_X(h^{(-1)} (y)),$$
gives
$$ f_Y^{(-1)}(c) =   h\left(f_X^{(-1)} (Jc)\right).$$
We thus need to establish whether
$h(f_X^{(-1)} (c)) \geq h( f_X^{(-1)} (Jc)).$
We see the statement reduces to
$c \geq J c,$ which holds by assumption.
\end{proof}

\section{Algebra for uncertainty}\label{sec:algebra}

\begin{defn}\label{def:invCDF}
For any two DR pdfs $\tilde{f}_1(z)$ and $\tilde{f}_2(z)$, define $\tilde{F}_1(z) \otimes \tilde{F}_2(z)$, the associated cdf of the density
\begin{align} \label{eq:x}
\tilde{f}_1(z) \; [+]_{ \frac{1}{2}} \; \tilde{f}_2(z) =  \left(\tilde{f}_1^{(-1)}(2z) + \tilde{f}_2^{(-1)}(2z) \right)^{(-1)}.
 \end{align}
\end{defn}

\begin{defn}\label{def:lattice}
For any two DR cdfs $\tilde{F}_1$  and $\tilde{F}_2 (z)$, define
\begin{equation}
\tilde{F}_1(z) \vee \tilde{F}_2 (z)  = \max(\tilde{F}_1(z), \tilde{F}_2(z)),\end{equation}
\begin{equation}
    \tilde{F}_1(z) \wedge \tilde{F}_2 (z)  = \min(\tilde{F}_1(z), \tilde{F}_2(z)),
\end{equation}
which themselves are cdfs.
\end{defn}

The partial ordering $\preceq$ under the meet and join $\vee$ and $\wedge$ defines a lattice
which we refer to as the {\em uncertainty lattice}. It is satisfying that the `meet' and `join' which are defined once $\preceq$ is established can be manifested by the max and min of Definition \ref{def:lattice}. Since we can embed a multidimensional distribution with density $f(x)$ into the one-dimensional DR, we claim that the lattice is universal.

The inverse mixing cdf $\otimes$  can be combined with $\vee$ (or $\wedge$). We modify the notation and replace $\vee$ by $\oplus$, which implies that $\oplus$ and $\otimes$ yield a so-called max-plus algebra (also called tropical algebra)  \cite{maclagan2015introduction}.  For this to be valid, we need to demonstrate that the distributive property holds.
 
\begin{lem}\label{dist}
We have
\begin{equation}
 \tilde{F}_3(z) \otimes ( \tilde{F}_1(z) \oplus \tilde{F}_2(z)) =  (\tilde{F}_3(z) \otimes \tilde{F}_1(z)) \oplus  (\tilde{F}_3(z) \otimes \tilde{F}_2(z)),
\end{equation}
where $\tilde{F}_1(z),\tilde{F}_2(z)$ and $\tilde{F}_3(z)$ are DR cdfs and  $\tilde{F}_1(z) \oplus \tilde{F}_2 (z) = \max(\tilde{F}_1(z),\tilde{F}_2(z))$.
\end{lem}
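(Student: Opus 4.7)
The strategy is to identify $\otimes$ as a scaled sup-convolution of DR cdfs, after which the distributive law collapses to the elementary identity that $\sup$ commutes with pointwise $\max$. The central step is to establish the variational representation
\[
(\tilde{F}_a \otimes \tilde{F}_b)(z) \;=\; \sup_{\substack{z_a,z_b \geq 0\\ z_a + z_b = z}} \tfrac{1}{2}\bigl(\tilde{F}_a(z_a) + \tilde{F}_b(z_b)\bigr),
\]
which recasts the analytic Definition~\ref{def:invCDF} into a variational one that plays well with $\oplus = \max$.

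For the variational identity, I will exploit concavity: each $\tilde{F}_i$ is concave because its derivative $\tilde{f}_i$ is nonincreasing. For fixed $z>0$, the map $\Psi(z_a) := \tfrac{1}{2}(\tilde{F}_a(z_a) + \tilde{F}_b(z-z_a))$ on $[0,z]$ is therefore concave, and its first-order optimality condition reads $\tilde{f}_a(z_a^\ast) = \tilde{f}_b(z - z_a^\ast) =: c$. This yields $z_a^\ast = \tilde{f}_a^{(-1)}(c)$, $z_b^\ast = \tilde{f}_b^{(-1)}(c)$, and $z = \tilde{f}_a^{(-1)}(c) + \tilde{f}_b^{(-1)}(c)$, which is exactly the relation characterising the inverse-mixture pdf $f = (\tilde{f}_a^{(-1)}(2\cdot) + \tilde{f}_b^{(-1)}(2\cdot))^{(-1)}$ at $z$, with $f(z) = c/2$. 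An integration-by-parts identity for decreasing functions, $\tilde{F}_i(\tilde{f}_i^{(-1)}(c)) = c\,\tilde{f}_i^{(-1)}(c) + \int_c^\infty \tilde{f}_i^{(-1)}(u)\,du$, together with the substitution $u \mapsto 2u$ that absorbs the factor of $2$ in Definition~\ref{def:inversemixing}, will match $\Psi(z_a^\ast)$ with $(\tilde{F}_a \otimes \tilde{F}_b)(z)$.

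Given the variational formula, the lemma falls out in three lines:
\begin{align*}
\tilde{F}_3 \otimes (\tilde{F}_1 \oplus \tilde{F}_2)(z) &= \sup_{z_3+z_0=z}\tfrac{1}{2}\bigl(\tilde{F}_3(z_3) + \max(\tilde{F}_1(z_0), \tilde{F}_2(z_0))\bigr)\\
&= \sup_{z_3+z_0=z}\max\!\bigl(\tfrac{1}{2}(\tilde{F}_3(z_3)+\tilde{F}_1(z_0)),\; \tfrac{1}{2}(\tilde{F}_3(z_3)+\tilde{F}_2(z_0))\bigr)\\
&= \max\!\bigl((\tilde{F}_3\otimes\tilde{F}_1)(z),\,(\tilde{F}_3\otimes\tilde{F}_2)(z)\bigr),
\end{align*}
using $\sup_S \max(u,v) = \max(\sup_S u, \sup_S v)$ and reapplying the variational formula to recognise each of the final suprema.

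The main obstacle I anticipate is handling boundary cases in the variational step: if the common level $c$ cannot be attained on the interior of $[0,z]$ (e.g.\ when one of $\tilde{f}_a, \tilde{f}_b$ has compact support, as in Example~\ref{example:6}), the maximiser sits at $0$ or $z$ and the inverse-mixture construction requires the $\max\{0,\cdot\}$ truncation used there. Concavity of $\Psi$ still guarantees the supremum is attained, but matching its value with the resulting piecewise cdf is fiddly. A secondary interpretive point is that $\tilde{F}_1 \oplus \tilde{F}_2$ is not in general a DR cdf, so $\tilde{F}_3 \otimes (\tilde{F}_1 \oplus \tilde{F}_2)$ is most naturally read through the variational formula itself, which extends $\otimes$ to any pair of concave increasing functions on $[0,\infty)$ and makes the three-line computation above fully justified.
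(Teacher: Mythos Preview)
Your argument is correct in outline, but it takes a substantially longer detour than the paper's proof. The paper argues directly at the level of functional inverses: since $\otimes$ is built from the operation $g \mapsto g^{(-1)}(2\,\cdot\,)$, sum, and re-inversion, one simply uses that for monotone functions $(\max(\tilde F_1,\tilde F_2))^{(-1)}=\min(\tilde F_1^{(-1)},\tilde F_2^{(-1)})$, distributes the addition of $\tilde F_3^{(-1)}$ over the $\min$, and then re-inverts to turn the $\min$ back into a $\max$. That is the whole proof. Your route first manufactures the sup-convolution identity
\[
(\tilde F_a\otimes\tilde F_b)(z)=\sup_{z_a+z_b=z}\tfrac12\bigl(\tilde F_a(z_a)+\tilde F_b(z_b)\bigr)
\]
via concavity, a first-order condition, and a layer-cake/integration-by-parts computation, and only then invokes $\sup\max=\max\sup$. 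The payoff of your approach is a genuinely informative variational description of $\otimes$ as (scaled) infimal/supremal convolution, which explains conceptually why tropical distributivity should hold and connects $\otimes$ to Legendre-type duality; the cost is that most of your effort goes into proving an auxiliary identity that the lemma does not require, together with the boundary-case bookkeeping you flag. Note also that your closing remark slightly overreaches: $\max(\tilde F_1,\tilde F_2)$ need not be concave, so ``extending $\otimes$ to any pair of concave increasing functions'' does not literally cover the left-hand side; what you actually need (and implicitly do) is to take the variational formula itself as the definition of $\otimes$ when one argument fails to be a DR cdf, which is consistent with the paper's own formal treatment.
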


\begin{proof}  
 The proof follows by switching the min and max when we take the inverses:
\begin{align*}
 \tilde{F}_3(z) \otimes \left( \tilde{F}_1(z) \oplus \tilde{F}_2(z) \right) 
  & = \tilde{F}_3^{(-1)}(2z)  + \left (\max \left (  \tilde{F}_1(2z) ,  \tilde{F}_2(2z) \right )^{(-1)}\right)^{(-1)},\\
    & =  \left (\tilde{F}_3^{(-1)}(2z)  +\min \left (  \tilde{F}^{(-1)}_1(2z) ,  \tilde{F}^{(-1)}_2(2z) \right ) \right)^{(-1)},\\
        & =  \left ( \min \left(
        \tilde{F}_3^{(-1)}(2z)  +  \tilde{F}^{(-1)}_1(2z),
         \tilde{F}_3^{(-1)}(2z)  +  \tilde{F}^{(-1)}_2(2z) \right ) \right)^{(-1)},\\
             & =   \max \left(
        \left (\tilde{F}_3^{(-1)}(2z)  +  \tilde{F}^{(-1)}_1(2z)\right )^{(-1)}, \left (         \tilde{F}_3^{(-1)}(2z)  +  \tilde{F}^{(-1)}_2(2z) \right )^{(-1)}\right ) ,\\
         & =   \max \left ( 
        \tilde{F}_3(z) \otimes \tilde{F}_1(z)), \tilde{F}_3(z) \otimes \tilde{F}_2(z) \right),
\end{align*}
from which the result is obtained.
 
 \end{proof}

\begin{defn}
The \emph{uncertainty ring} is the toric (semi) ring of non-decreasing, twice-differentiable functions on $[0, \infty)$  on  $\otimes$ and $\oplus$ and with $\oplus$ identity as $-\infty$.
\end{defn}
 We note that the $\otimes$ unit element will be $0$ and the $\oplus$ unit element will be $-\infty$. To obtain proper decreasing densities, we need to impose the additional condition that $\tilde{f}(z)$ is decreasing, $\tilde{F}(z)$ is a non-negative function, $\tilde{F}(0) = 0$ and $\tilde{F}(z) \rightarrow 1$ as $z \rightarrow \infty$. 
To introduce polynomials which comprise the ring needs the concept of a power. Consider the pdf arising from $\tilde{F}_1(z) \otimes \tilde{F}_1(z)$,
\begin{align}
\tilde{f}(z) = \left( \tilde{f}_1^{(-1)}(2z) + \tilde{f}_1^{(-1)}(2z) \right)^{(-1)} = 
                \frac{1}{2} \tilde{f}_1\left(\frac{z}{2} \right), 
\end{align} 
which is the pdf for the scaled random variable $Y= 2 X$ where $X \sim \tilde{f}_1(z)$, and 
the cdf for $Y$ is $\tilde{F}_1\left(\frac{z}{2}\right)$. In general, we define the $k^{\text{th}}$ $\otimes$ power as
\begin{align}
\otimes^n \tilde{F} (z) = \tilde{F}\left( \frac{z}{n} \right).
\end{align}
The intuition of this expression is that increasing powers represent increasing dilation and form a decreasing chain with respect to our $\preceq$ ordering. A monomial with respect to $\otimes$ takes the form
\begin{align}
\prod_{i=1}^{m} \otimes^{\alpha_i} \tilde{F}_i(z). 
\end{align}
Adjoining the base field $\mathbb R$, and appealing to Lemma \ref{dist}, we can define a ring of tropical polynomials \cite{glicksberg1959convolution}.

We summarise the operations that we have:
\begin{enumerate}
\item Scalar multiplication $\tilde{F}(z) \rightarrow \beta\tilde{F}(z)$, $ \beta \in \mathbb R$.
\item Inverse mixing $\tilde{F}_1(z) \otimes \tilde{F}_2(z)$ and $\otimes$ powers and monomials/polynomials
\item Maximum and minimum of $\tilde{F}_1(z)$ and $\tilde{F}_2(z)$, denoted $\vee$ and $\wedge$, respectively.
Noting that $\vee$ is written $\oplus$, when discussing the ring. We can also define a min-plus algebra and may use $\oplus$.
\item The one-dimensional DR from independent pairs of random variable $(\tilde{F}_1,\tilde{F}_2)$, where $\tilde{F}_1$ and $\tilde{F}_2$ can come from different distributions.
\item Convolutions $\tilde{F}_1(z) * \tilde{F}_2(z)$. This refers to the DR cdf of the sum of independent random variables $X_1 \sim f_1(x)$ and $X_2 \sim f_2(x)$. 
\end{enumerate}

Further natural developments using ring concepts such as ideals are the subject of further work.  In fact, convolutions themselves form a semi-group, but we do not delve into the relationship between our ring and that semi-group. It is instructive to work over the binary field so that we do not have to use full scales from $\mathbb R$, but only $\{0,1\}$. This also has the advantage that, in every polynomial, we have proper pdfs and cdfs. An analogy is Boolean algebra. In the next example we illustrate these operations to demonstrate the complexity that can arise from from a single distribution.

\begin{example} \label{sec6:example_exp} 
Let $X_1 \sim \exp\{-x_1\}, $ and $X_2\sim \exp\{-(x_1+x_2) \}$ with $x_1, x_2>0$. The DRs are given by Equation (\ref{eq:DRM_mult_exp}), from which the DR cdfs are
\begin{eqnarray}
\tilde{F}_1(z) & = &   1-e^{-z}, \\
\tilde{F}_2(z) & = &  1-(1+\sqrt{2z})e^{-\sqrt{2z}}.
\end{eqnarray}

To compute $\tilde{F}_3(z)  =  \otimes^2 \tilde{F}_1(z)$, we first calculate the inverse mixing of $\tilde{f}_1(z)$ with itself and $\alpha=1/2$, 
\begin{align}
      \tilde{f}_3(z)     = \big(-2\log(2z) \big)^{(-1)}=\frac{1}{2}e^{-z/2},
\end{align}
with the corresponding cdf,
\begin{equation}
  \tilde{F}_3(z)=1-e^{-\frac{z}{2}}.  
\end{equation}

Similarly, to compute $\tilde{F}_4(z)=\otimes^2 \tilde{F}_2(z)$, we first obtain the two solutions for $\tilde{f}_4(z)=\exp\{-\sqrt{z}/2\}$ and $\tilde{f}_4(z)=\exp\{\sqrt{z}/2\}$. As the second solution does not satisfy the definition of DRs, we derive the cdf from the first:
\begin{equation}
    \tilde{F}_4(z) = 1-(1+\sqrt{z})e^{-\sqrt{z}}.
\end{equation}
Note that the following relationships between DR cdfs hold:
$    \tilde{F}_3(z) = \tilde{F}_1 (z/2 ) $ and $ \tilde{F}_4(z) = \tilde{F}_2(z/2 )$. Finally, we compute the DR cdf for the convolution of two univariate standard exponential random variables, with pdf $f_3(x) = x\exp\{-x\}$, i.e., $X_3=X_1+X_1$, denoted as $\tilde{F}_5(z) =  (\tilde{F}_1(z) * \tilde{F}_1(z))$, we employ the slice method introduced in Example \ref{example|_beta}. We have
\begin{equation}
   \tilde{F}_5(z) = \exp\Big\{-\frac{z}{e^z-1}\Big\} - \exp\Big\{-\frac{ze^z}{e^z-1} \Big\}. 
\end{equation}

\begin{figure}[h!]
\begin{center}
\includegraphics[width = 0.4\textwidth]{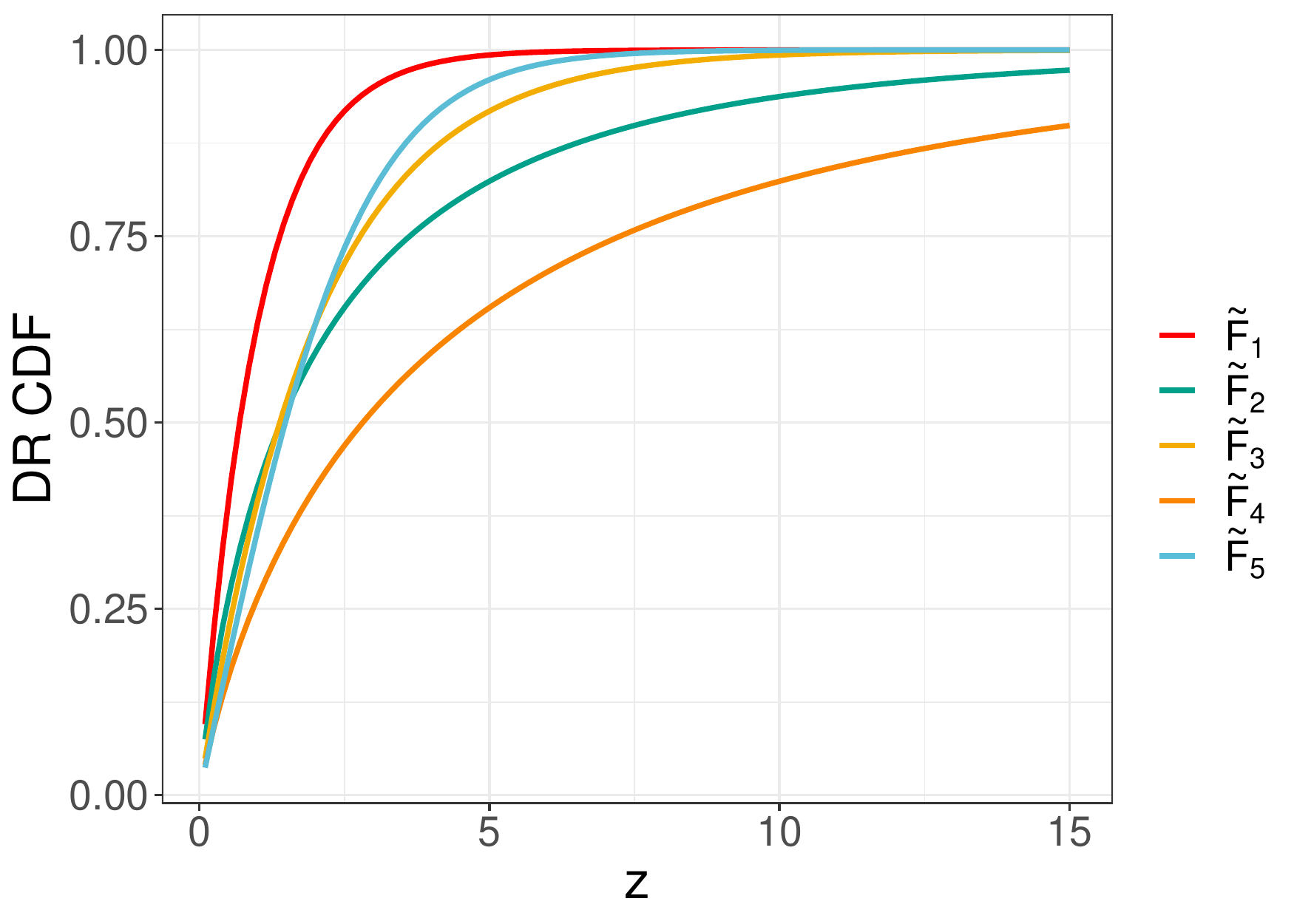}
\caption{Example \ref{sec6:example_exp}. DR cdfs for various operations on $X_1$ and $X_2$.}
\label{fig:PlotExample10}
\end{center}
\end{figure}

Figure \ref{fig:PlotExample10} illustrates the relationships between DR cdfs and we observe that 
$    \tilde{F}_4(z)\preceq\tilde{F}_2(z)\preceq \tilde{F}_1(z),$    $ \tilde{F}_4(z) \preceq\tilde{F}_3(z)\preceq \tilde{F}_1(z),$ and 
$      \tilde{F}_4(z)\preceq\tilde{F}_5(z)\preceq \tilde{F}_1(z)$.
and there are no orderings between $\tilde{F}_2(z), \tilde{F}_3(z)$ and $\tilde{F}_5(z)$. Figure \ref{fig:PlotExample10_2} shows that under $\vee$ and $\wedge$, we have the following sets of inequalities
 \begin{align}\label{lattice_eqn}
 \begin{array}{cc}
   & \tilde{F}_4(z)\preceq \tilde{F}_2(z)\wedge \tilde{F}_3(z)\preceq \tilde{F}_2(z)\vee \tilde{F}_3(z) \preceq \tilde{F}_1(z),\\
   & \tilde{F}_4(z)\preceq \tilde{F}_3(z)\wedge \tilde{F}_5(z)\preceq \tilde{F}_3(z)\vee \tilde{F}_5(z) \preceq \tilde{F}_1(z),\\
  &  \tilde{F}_4(z)\preceq \tilde{F}_2(z)\wedge \tilde{F}_5(z)\preceq \tilde{F}_2(z)\vee \tilde{F}_5(z) \preceq \tilde{F}_1(z),
 \end{array}
\end{align}
from which the full lattice can be formed.

\begin{figure}[h!]
\begin{center}
\includegraphics[width = 1\textwidth]{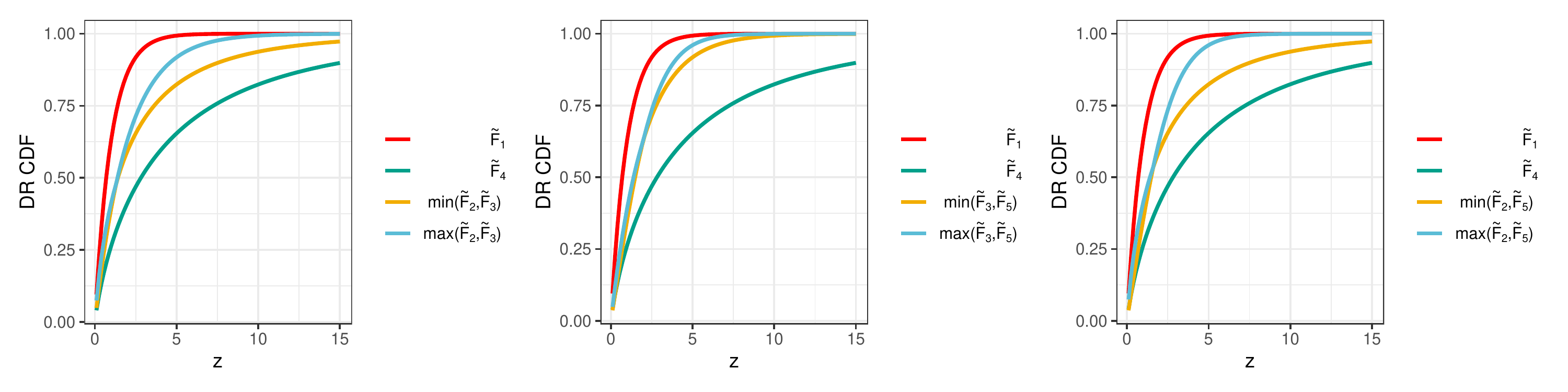}
\caption{Example \ref{sec6:example_exp}. DR cdfs of Equation \eqref{lattice_eqn}.}
\label{fig:PlotExample10_2}
\end{center}
\end{figure}
\end{example}

Given the equivalence relation (B2) in Equation \eqref{equiv_convex}, we can study how the above structures affect the manipulation of uncertainty measured by a single metric. Since $H(f(x)) = \int_{0}^{\infty} h(\tilde{f}(z)) dz$ , without loss of generality, we can consider $H$ as a functional of the DR with the advantage that the operations $\otimes,\oplus$ can be applied. If we are able either theoretically or computationally to show that the ordering holds, then we feel that this strong condition deserves to be a candidate for a universal version of what we may mean by ``more certain'' or ``more uncertain''. We have seen with the above examples that computing whether two distributions can be compared according to $\preceq$ may be not be an easy computation.  This relies on the difference of the DR cdfs not having a zero value on $[0,\infty)$. What we term the ``ring" above concerns equalities, not inequalities. The relationship between these equalities and the partial ordering $\preceq$ requires a fuller development.  With $\vee$ and $\wedge$, the situation is clearer, but with other operations, this is not the case and may depend radically on the distributional family concerned. We present a practical situation in which the results above can be used.

\begin{example}
Consider $2$ horseraces each with a different number of horses $n_i$ ranked by a punter (or bookmaker) in order of their probability of winning:  
$p_{(i,1)} \geq \cdots \geq p_{(i,n_i)}$, such that $\sum_{j=1}^{n_i}p_{i,j}=1$. If the two sets of horses are to be combined into a single race, the issue then is how to combine the probabilities. Dividing each probability by two, combining and ranking them, i.e, $\tilde{F}_1(z) \otimes \tilde{F}_2(z)$, is inverse mixing with $\alpha = \frac{1}{2}$. One can imagine some effect which may lead to having $\alpha \neq \frac{1}{2}$ such as the track being wet and not suited to one set of horses. 

Consider the case of a single race with two punters that rank the horses in the same order, but with different probabilities: $p_{(1,1)}\geq \ldots \geq p_{(1,n_1)}$ and  $q_{(1,1)} \geq \ldots \geq \ldots q_{(1,n_1)}$. To define a joint betting strategy, a set of odds combining each of the individuals' odds could take different approaches: an optimistic, more certain, approach with $\tilde{F}_1(z) \vee \tilde{F}_2(z)$ or a more pessimistic, uncertain, approach with $ \tilde{F}_1(z) \wedge \tilde{F}_2(z)$. This argument is predicated on the rank order being the same, otherwise the same horse may appear twice in the min or max ordering. The min or max may then refer to a kind of hypothetical race. Nonetheless, we suggest that they are useful notionally. The same issue arises if one considers an average of the two actual probabilities, direct mixing, 
$\frac{1}{2}(p_{(1,1)} +q_{(1,1)} )\geq \ldots \geq\frac{1}{2}(p_{(1,n_1)}+q_{(1, n_1)}),$ which corresponds to taking $\frac{1}{2}(\tilde{F}_1(z) + \tilde{F}_2(z))$.
 \end{example}

\section{Empirical decreasing rearrangements}\label{sec:empirical}
We present two approaches for deriving the empirical DR and its associated cdf for the analysis of an experimental data set. In Section \ref{subsec:Discrete_Climate}, we assess the uncertainties associated with climate projections in two dimensions. In Section \ref{subsec:Cont_Heat}, we present two algorithms to obtain approximations for $\tilde{f}(z)$ and $\tilde{F}(z)$ for data sets in higher dimensions, and apply this to energy systems planning in Section \ref{subsubsec:DHE}.

\subsection{Climate projections}\label{subsec:Discrete_Climate}
The 2018 UK climate projections \cite{UKCP18} considered four different scenarios, called Representative Concentration Pathways (RCP), for greenhouse gas concentrations in the atmosphere.
These scenarios contain a range of inputs that reflect socio-economic change, technological change, energy use and emissions of greenhouse gases and air pollutants, which are used to study the impact on climate through to the year 2100.  We consider two variables: (i) the increase in mean air temperature at a height of 1.5 m, and (ii) the percentage increase in precipitation, where each variable is relative to the baseline period of 1981-2010.
The projections illustrated in Figure~\ref{fig:climate_scatter} correspond to mean daily values over the period 2050-2079. The data is discretised into twenty categories, as temperature anomaly is divided into five categories and precipitation into four. From this, the probabilities are ordered, and we obtain empirical DR cdfs in the left panel of Figure \ref{fig:climate_example_pdf_cdf2}. Observing that the cdf for RCP2.6 majorises that of RCP8.5, we conclude that RCP8.5 is more uncertain than RCP2.6. The maximum and minimum of the cdfs are given in the right panel, and RCP2.6 carries the lowest level of uncertainty among the considered scenarios since its cdf corresponds to $\tilde{F}_1(z) \lor\tilde{F}_2(z)\lor\tilde{F}_3(z)\lor\tilde{F}_4(z)$, where the subscript indicates the scenario. In contrast, RCP8.5 carries the most uncertainty, as its cdf corresponds to $\tilde{F}_1(z)	\land\tilde{F}_2(z)\land\tilde{F}_3(z)\land\tilde{F}_4(z)$. 
In this analysis, majorisation identifies one comparison of universal uncertainty, illustrating that scenarios are associated with different levels of uncertainty. In addition, transforming the climate projection data into DR cdfs allows us to visualise the uncertainty associated with RCP scenarios and is therefore a tool for the communication of uncertainty in settings with decision makers and stakeholders \cite{gov_toolkit}.

\begin{figure}[ht]
\begin{center}
\includegraphics[width=0.55\textwidth]{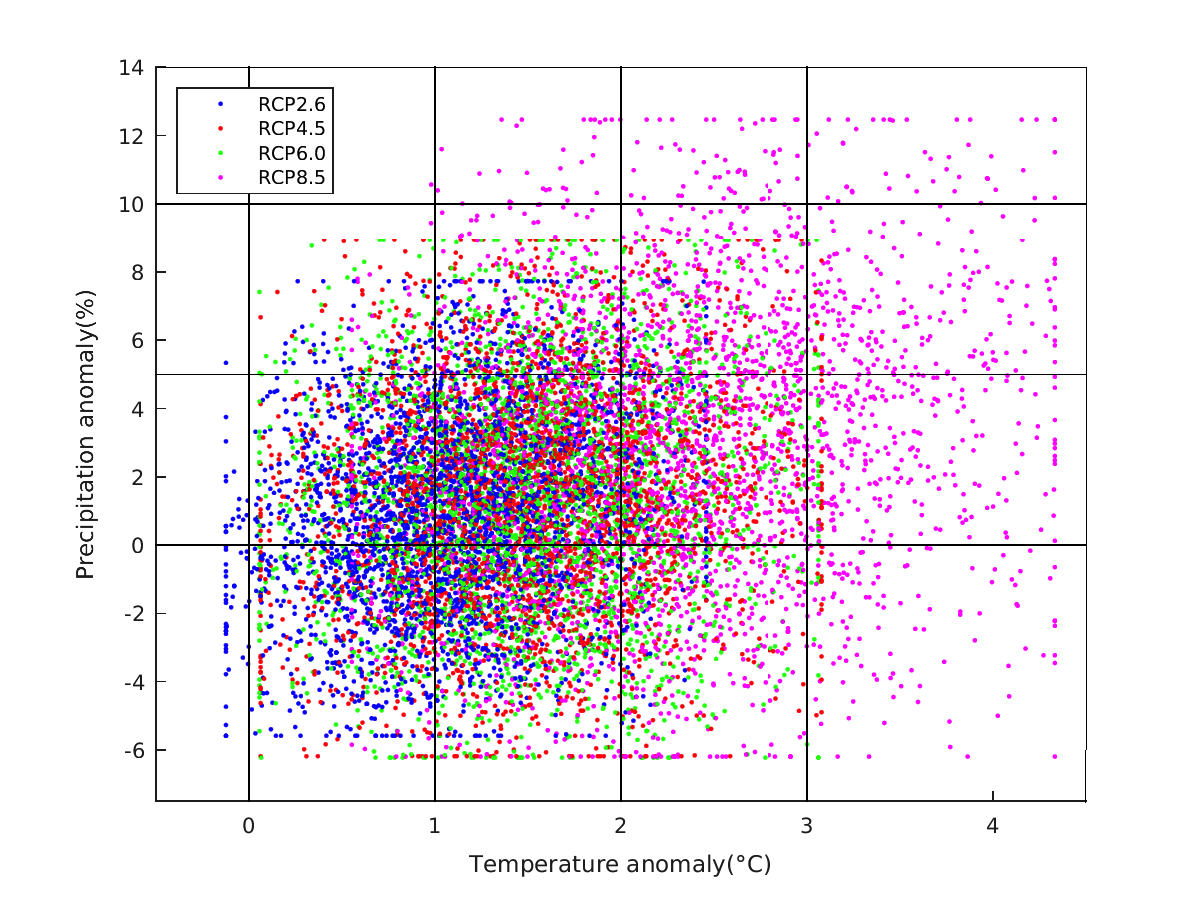}
\vspace{-1em}
\end{center}
\caption{Projections of mean daily values over the period 2050-2079 \cite{UKCP18}. Each point represents an ensemble member and each colour represents a different RCP.}
\label{fig:climate_scatter}
\end{figure}

\begin{figure}[h]
\begin{center}
\includegraphics[width=0.75\textwidth]{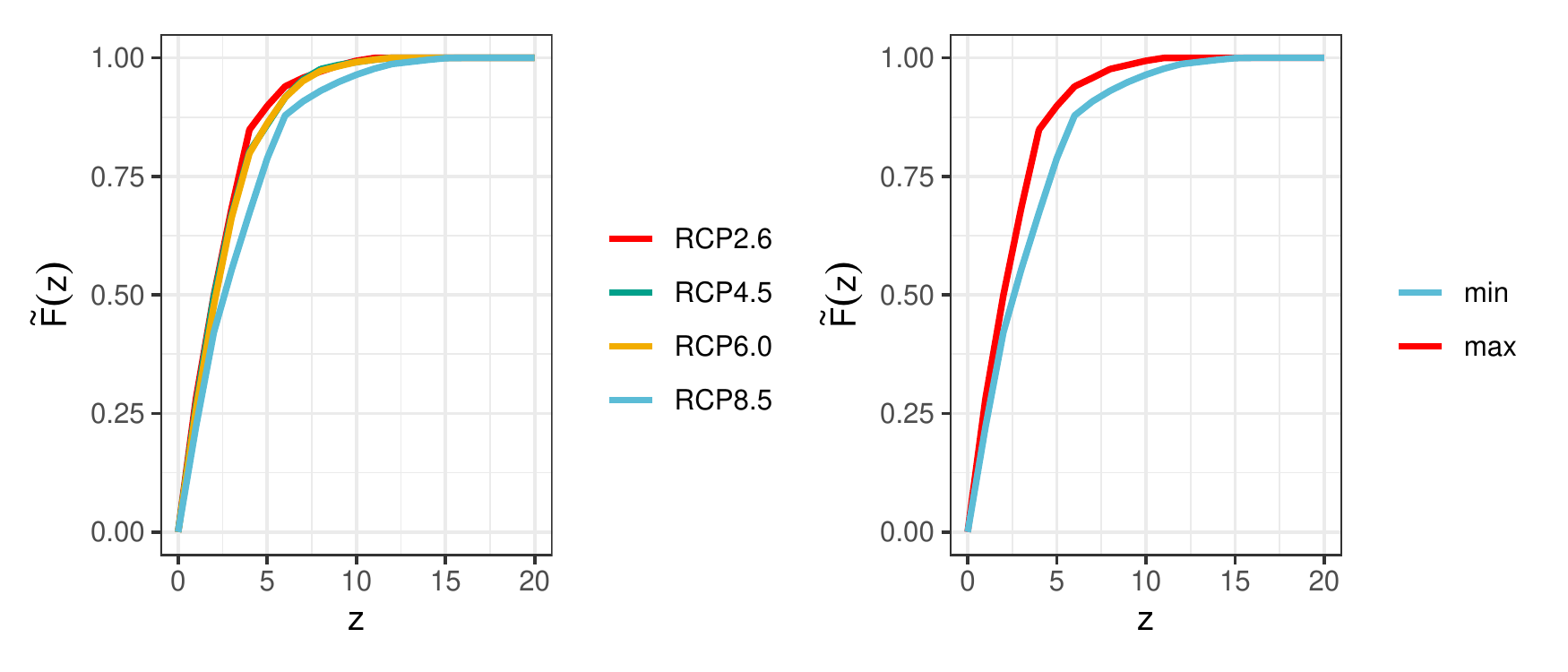}
\vspace{-1em}
\end{center}
\caption{\textit{Left panel}: Empirical DR cdf for each RCP scenario. \textit{Right panel}: the representation of $\tilde{F}_1(z) \lor\tilde{F}_2(z)\lor\tilde{F}_3(z)\lor\tilde{F}_4(z)$ and $\tilde{F}_1(z)	\land\tilde{F}_2(z)\land\tilde{F}_3(z)\land\tilde{F}_4(z)$.}
\label{fig:climate_example_pdf_cdf2}
\end{figure}

\subsection{Majorisation in higher dimensions}\label{subsec:Cont_Heat}
Consider the data set $x_{ij}$ of data points $ i=1, \dots, m$ and dimension $j=1, \dots, n$. To obtain the DR, we require the density function values to construct the measure (distribution) function $m(y)$. Assume the observed data is sampled from a population with unknown pdf $f_X(x_1, \dots, x_n)$, from which we estimate the pdf $\hat{f}_X(x_1, \dots, x_n)$. We employ kernel density estimation (KDE) \cite{Parzen1962} to obtain $\hat{f}_X(\cdot)$ using the \texttt{ks} package in \texttt{R}, which automatically selects the bandwidth parameters \citep{ks2020}. Alternative approaches such as density forests \cite{Criminisi2013} and the $k$-nearest neighbour density estimation algorithm \cite{Bishop2006} would also be appropriate. To obtain empirical DRs, we adopt a two stage process for $\tilde{f}_{\hat{f}}(z)$ as described in Algorithm 1. The first stage involves obtaining the distribution function $m(y)$, which is used in the second stage to to derive the DR.
 
\begin{algorithm}[H]
\SetAlgoLined
Based on data $x_{ij}\in R, i=1, \dots, m$ and $j=1, \dots, n$, fit a pdf $\hat{f}_X(x_1, \dots, x_n)$ using KDE\;
Produce a uniform and/or space-filling set $S$ of size $N$ across the input space $R$, with $s\in S$\;
 \For{$y=y_1, \dots, y_M$}{
  Derive a set $S_y=\big\{s\in S:\hat{f}_X(s)>y \big\}$ of size $N_y=\vert S_y\vert$\;
  Estimate the volume of $S_y$, i.e., $m_{\hat{f}}(y)=\text{Vol}(S_y)$ by the Monte Carlo method\;
 }
 Plot the estimated measure function values, $m_{\hat{f}}(y)$ against $y$\;
 Swap the axes, so that $\tilde{f}_{\hat{f}}(z)$ and $z$ correspond to $y$ and $m_{\hat{f}}(y)$.
 \caption{Empirical DR $\tilde{f}_{\hat{f}}(z)$.}
\end{algorithm}

Monte Carlo integration is used to estimate the volume of domain $S_y$ to derive the measure function $m_{\hat{f}}(y)$. In particular, \cite{Fok1989} proposed specifying another domain $R$ (a hypercube or a hyperplane) of known volume $\text{Vol}(R)$, such that $S_y\in R$. The ratio of two volumes, $p=\text{Vol}(S_y)/\text{Vol}(R)$, and the volume $\text{Vol}(S_y)$ are estimated by $\hat{p}=N_y/N$ and $\hat{\text{Vol}}(S_y)=\hat{p}\text{Vol}(R)$.

We demonstrate the use of Algorithm 1 by generating a random sample of size $m=200$ from the standard bivariate normal distribution, with DR given in Equation (\ref{eq:DRM_mult_normal}). To apply the algorithm, we produce a uniform sample of points of size $N=2500$ across the domain $R=[-5, 5]\times [-5, 5]$ of $\text{Vol}(R)=10^2$. In the left panel of Figure \ref{fig:EmpDRFinal1} we depict the estimated values of the distribution function $m_{\hat{f}}(y)$ against $y$ and
note that the smoothness of the estimated distribution  depends on $M$, thus we expect to obtain a smooth representation with large $M$ (cutoffs in density). In the right panel of Figure \ref{fig:EmpDRFinal1} we compare $\tilde{f}_{\hat{f}}(z)$ with $\tilde{f}(z)$ and observe that the empirical DR (red dashed line) overlaps with the DR (blue solid line).

\begin{figure}[h!]
\begin{center}
\includegraphics[width=.7\textwidth]{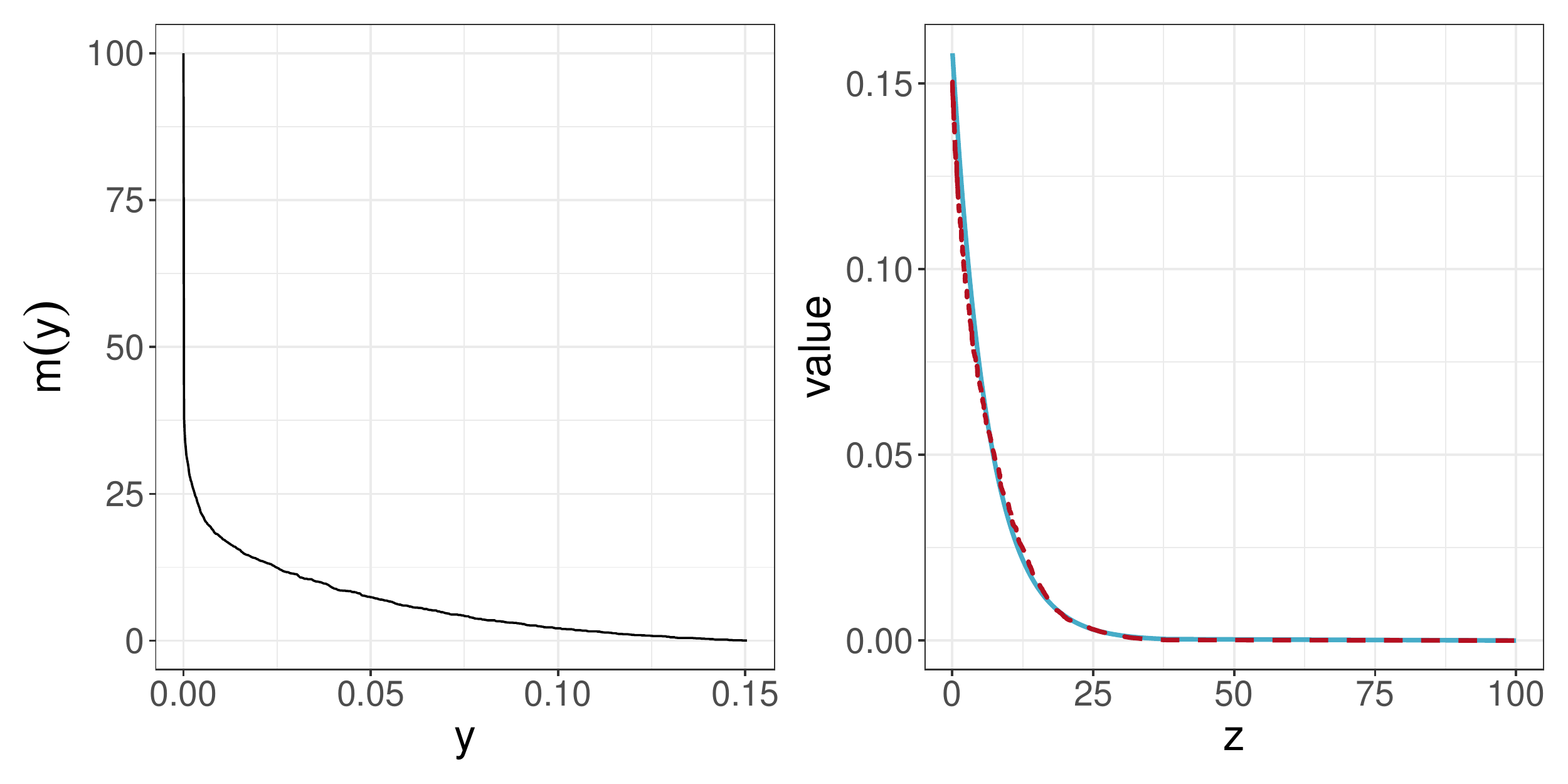}
\vspace{-1em}
\end{center}
\caption{\textit{Left panel}: Estimated measure function $m_{\hat{f}}(y)$. \textit{Right panel}: 
$\tilde{f}(z)$ (blue line)  and $\tilde{f}_{\hat{f}}(z)$ (red dashed line) .}
\label{fig:EmpDRFinal1}
\end{figure}

\begin{algorithm}[h]
\SetAlgoLined
Specify an equally spaced vector $\boldsymbol{z}^*=(z_1^*, z_2^*, \dots, z_l^*)$\;
Fit a linear interpolator (spline) through $\{z_i, \tilde{f}_{\hat{f}}(z_i)\}_{i=1}^M$ (these values were derived in Algorithm 1) to obtain values of $\tilde{f}_{\hat{f}}(z_i^*), i=1, \dots, l$\;
 \For{$z_i^*, i=1 \dots, l-1$}{
  Estimate probability values $P(z_i^*<z<z_{i+1}^*)$ by numerical integration\;
  Obtain values $\tilde{F}_{\hat{f}}(z_i^*)=\frac{\sum_{k=1}^{i-1}P(z_k<z<z_{k+1})}{\sum_{k=1}^{l-1}P(z_k<z<z_{k+1})}$\;
 }
 Plot $\tilde{F}_{\hat{f}}(z^*)$ against $z^*$.
 \caption{Empirical DR cdf $\tilde{F}_{\hat{f}}(z)$.}
\end{algorithm}

We present Algorithm 2 for obtaining an empirical cdf of the DR, an approximation to $\tilde{F}(z)$, denoted by $\tilde{F}_{\hat{f}}(z)$. The weighting of computed probabilities by 
$(\sum_{k=1}^{l-1}P(z_k<z<z_{k+1}))^{-1}$
comes from the assumption that $z$ is upper bounded and we can only compute probabilities at the values specified in $\boldsymbol{z}^*$. Therefore, we expect $\sum_{k=1}^{l-1}P(z_k<z<z_{k+1})=1$. However, we tend to observe this sum to be slightly less than one due to errors introduced by numerical integration. In Figure \ref{fig:CDFDR2D} we apply the algorithm on the bivariate data set used previously in this section. The closed form expression for $\tilde{F}(z)$ is
$ \tilde{F}(z)=1-\exp \{-\frac{z}{2\pi}  \}$.
From the right panel, it can be seen that the empirical cdf $\tilde{F}_{\hat{f}}(z)$ is an accurate representation of $\tilde{F}(z)$. 

\begin{figure}[h!]
\begin{center}
\includegraphics[width=.7\textwidth]{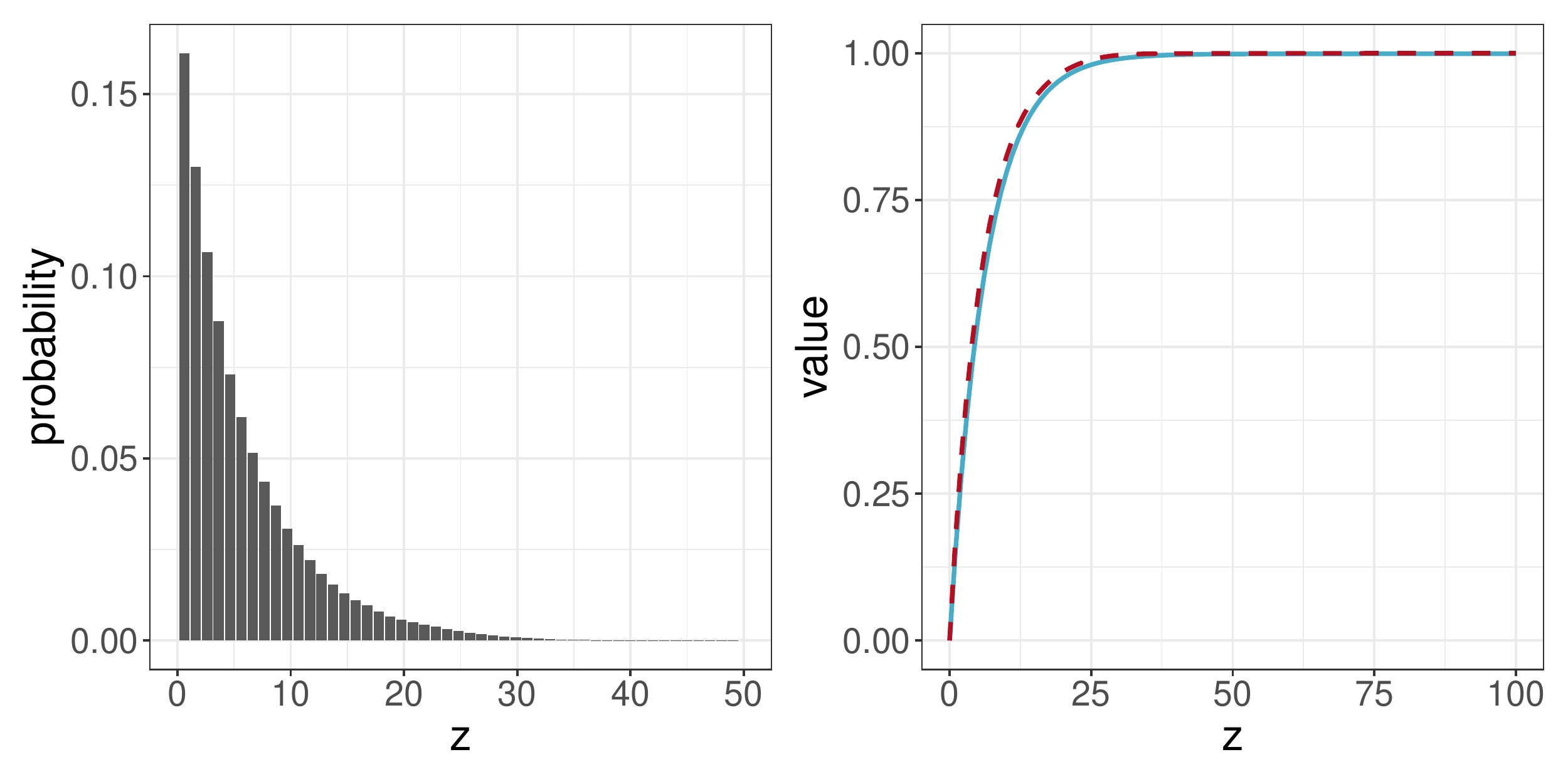}
\vspace{-1em}
\end{center}
\caption{\textit{Left panel}: binned probability representation of the empirical DR, $\tilde{f}_{\hat{f}}(z)$,  obtained as part of Algorithm 2. \textit{Right panel}: empirical DR cdf, $\tilde{F}_{\hat{f}}(z)$ obtained from Algorithm 2. The blue solid line and red dashed line correspond to $\tilde{F}(z)$ and $\tilde{F}_{\hat{f}}(z)$ respectively.}
\label{fig:CDFDR2D}
\end{figure}

\subsection{Energy systems planning}
\label{subsubsec:DHE}
We compare the uncertainty associated with three potential design options for supplying heat to a residential area in Brunswick, Germany, considered as part of EU project ReUseHeat (Recovering urban excess heat) \cite{REUSEHEAT}.  District heating networks allow heat from a centralised source to be distributed to buildings through a network of insulated pipes \cite{werner2013district}, and the primary objective of the project is to demonstrate the use of low temperature sources of heat in these networks. 

The city's existing district heating network is powered by a Combined Heat and Power (CHP) plant, which uses natural gas as a fuel and outputs both heat for use in the network and electricity. The network in the newly constructed area of interest will be connected to the CHP and, in addition, there is an option to use excess heat from a nearby data centre to provide at least some of the heat to the district. Excess heat from a data centre is a low temperature source which requires an electric heat pump to ``upgrade'' the temperature before being suitable for use in the system.
 
We are interested in the uncertainty ordering for three heating design options: (1) CHP, (2) CHP and Heat Pump, (3) Heat Pump; considering the two variables: Net Present Cost (NPC) and $\text{CO}_2$-equivalent emissions (in metric tonnes). Using an energy systems simulation (OSeMOSYS \cite{Howells2011}), we produce predicted outputs for these variables. We define three scenarios by varying a number of inputs to the simulations, in particular elements of government climate policy and consumer engagement with green technology.  These are shown in Table \ref{tab:Scenarios}. We refer to Volodina \emph{et al.} \cite{Volodina2020} for further details.

\begin{table}[h!]
\caption{District heating study scenarios \cite{Volodina2020}.}  
\begin{center}
\begin{small}
\begin{tabular}{c|lll}
\midrule
\textbf{Scenario} & \textbf{Emission Penalty} & \textbf{Consumer demand} & \textbf{Commodity prices}\\
Green & 100\euro/metric tonne & -1\% annual change &$\uparrow$ gas, $\downarrow$ electricity\\
Neutral & 40\euro/metric tonne &small fluctuations & small fluctuations\\
Market & no penalty & +1\% annual change & $\downarrow$ gas, $\uparrow$ electricity\\
\midrule
\end{tabular}
\end{small}
\end{center}
\label{tab:Scenarios}
\vspace{-1em}
\end{table}

Figure \ref{fig:ScatterPlot} shows the distribution of points produced for each design option and scenario. Design option 1 is associated with the highest level of emissions due to the use of natural gas, while design option 3 has the lowest emissions levels but has the highest costs.

\FloatBarrier
\begin{figure}[h!]
\begin{center}
\includegraphics[width=0.55\textwidth]{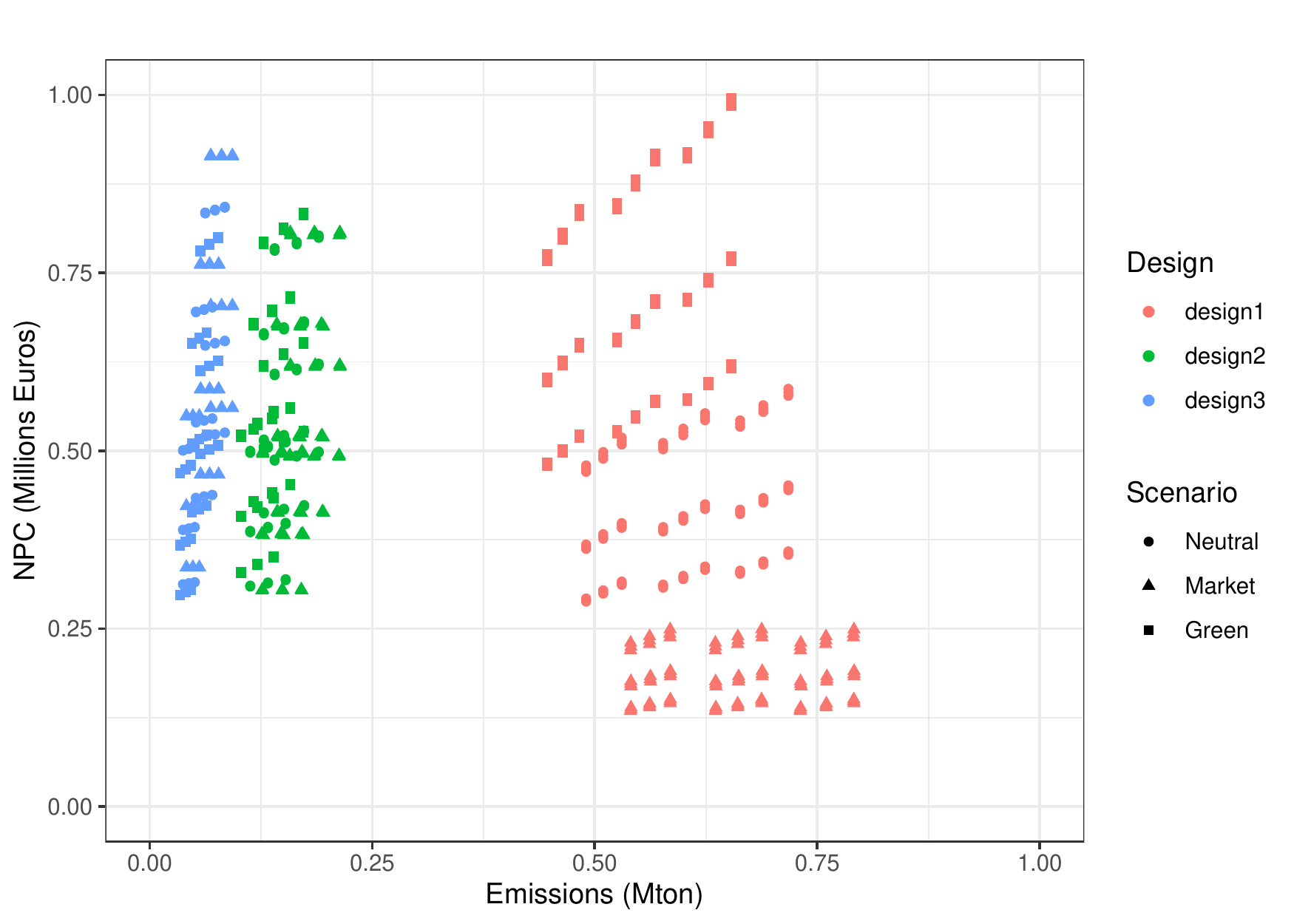}
\end{center}
\caption{Net Present Costs against carbon emissions for each design option and scenario. }
\label{fig:ScatterPlot}
\end{figure}
\FloatBarrier

Employing Algorithms 1 and 2 on the model outputs, we obtain $\tilde{f}_{\hat{f}}(z)$ and $\tilde{F}_{\hat{f}}(z)$. To apply equal importance to both outputs, we scale the data on $[0, 1]$ and generate a uniform set $S$ across $[0, 1]\times [0, 1]$ of size $N=2500$. To produce a smooth representation of the DR and its cdf, we set $M=5000$.

\FloatBarrier
\begin{figure}[h!]
\begin{center}
\includegraphics[width=1\textwidth]{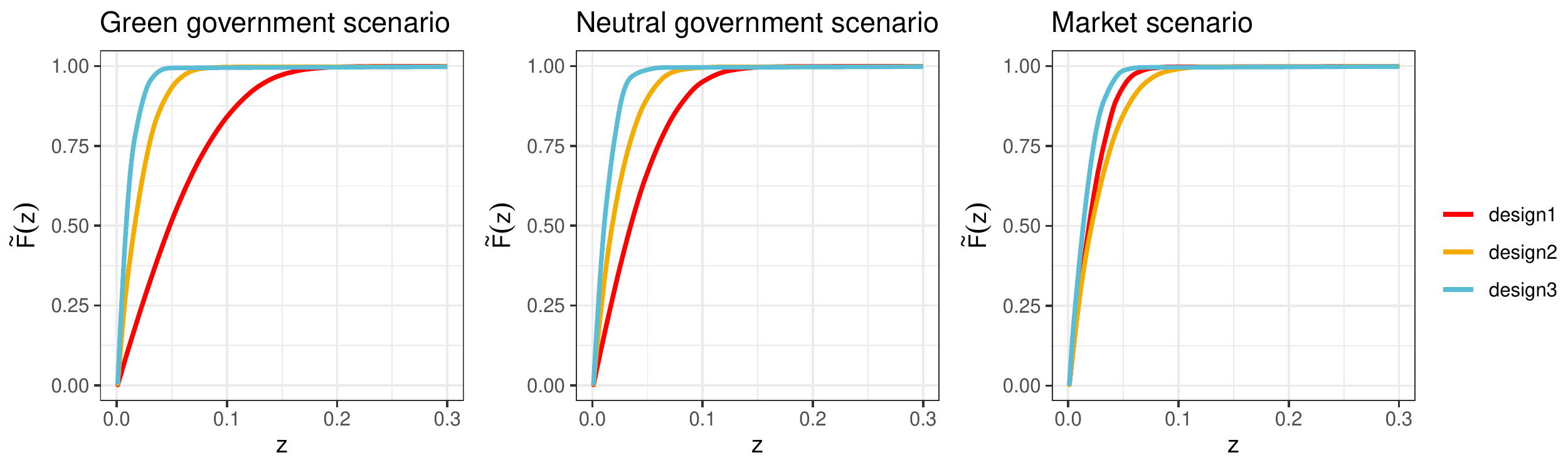}
\vspace{-2em} 
\end{center}
\caption{Empirical DR cdfs  $\tilde{F}_{\hat{f}}(z)$ for all three design options plotted together for each individual scenario.}
\label{fig:CDF_HeatExample}
\end{figure}
\FloatBarrier

Plots of the empirical DR cdfs $\tilde{F}_{\hat{f}}(z)$ are shown in Figure \ref{fig:CDF_HeatExample}. A feature here is that, under the green and neutral scenarios, the cdf for design option 3 lies above that for design option 2, which lies above that for design option~1 whilst, under the market scenario, the difference of DR cdfs for design option 1 and design option 2 contains a zero, which indicates that the two distributions cannot be compared according to $\preceq$. We conclude that, under all three scenarios, the (unknown) distribution function associated with design option 3 majorises the cdfs for both design options 1 and 2. Therefore, for the outputs considered, design option 3 is less uncertain (more robust) than the alternatives.

Table \ref{tab:entropies} provides values of Shannon and Tsallis entropies computed using the DR pdfs for each design option under the three scenarios. We observe that the total orderings imposed by the entropies on the distribution functions are in agreement with the majorisation orderings in Figure \ref{fig:CDF_HeatExample} under the green and neutral scenarios. This result is supported by condition (B2) in Section \ref{sec:cont_major}, whereas, under the market scenario, both entropy measures provide us with the total orderings, which are different to each other. In particular, the lowest Shannon entropy is obtained for design option 3 followed by design option 1 and design option 2, whereas, for Tsallis entropy, the value for design option 2 is lower that for design option 1.
 
\begin{table}[h!]
\caption{Entropies computed using DR pdfs for each design option under three scenarios.}  
\begin{center}
\begin{small}
\begin{tabular}{c|lll|lll}
& \multicolumn{3} {l|}{\textbf{Shannon entropy}}& \multicolumn{3} {l}{\textbf{Tsallis entropy with $\gamma = 1$ }}\\
\midrule
\textbf{Option} & \textbf{Green} & \textbf{Neutral} & \textbf{Market}& \textbf{Green} & \textbf{Neutral} & \textbf{Market}\\
design 1 & 7.45 & 7.28 & 6.60 & 0.927 & 0.944 & 0.931\\
design 2 & 6.49 & 6.62 & 6.80 & 0.920 & 0.922 & 0.928\\
design 3 & 5.84 & 5.97 & 6.11 & 0.904 & 0.902 & 0.896\\
\midrule
\end{tabular}
\end{small}
\end{center}
\label{tab:entropies}
\vspace{-1em}
\end{table}

We now demonstrate the uncertainty tools from Section \ref{sec:algebra} in order to combine the uncertainty under different scenarios and produce orderings of the design options. In particular, under each design option, we find the maximum of the empirical cdfs associated with individual scenarios to obtain an approximation to $\tilde{F}_1(z)\lor\tilde{F}_2(z)\lor\tilde{F}_3(z)$.  This is shown in the left panel of Figure \ref{fig:maxminDR} and can be considered to represent an  optimistic (more certain) approach. We find that design option 3 majorises the other design options. 
\FloatBarrier
\begin{figure}[h!]
\begin{center}
\includegraphics[width=.7\textwidth]{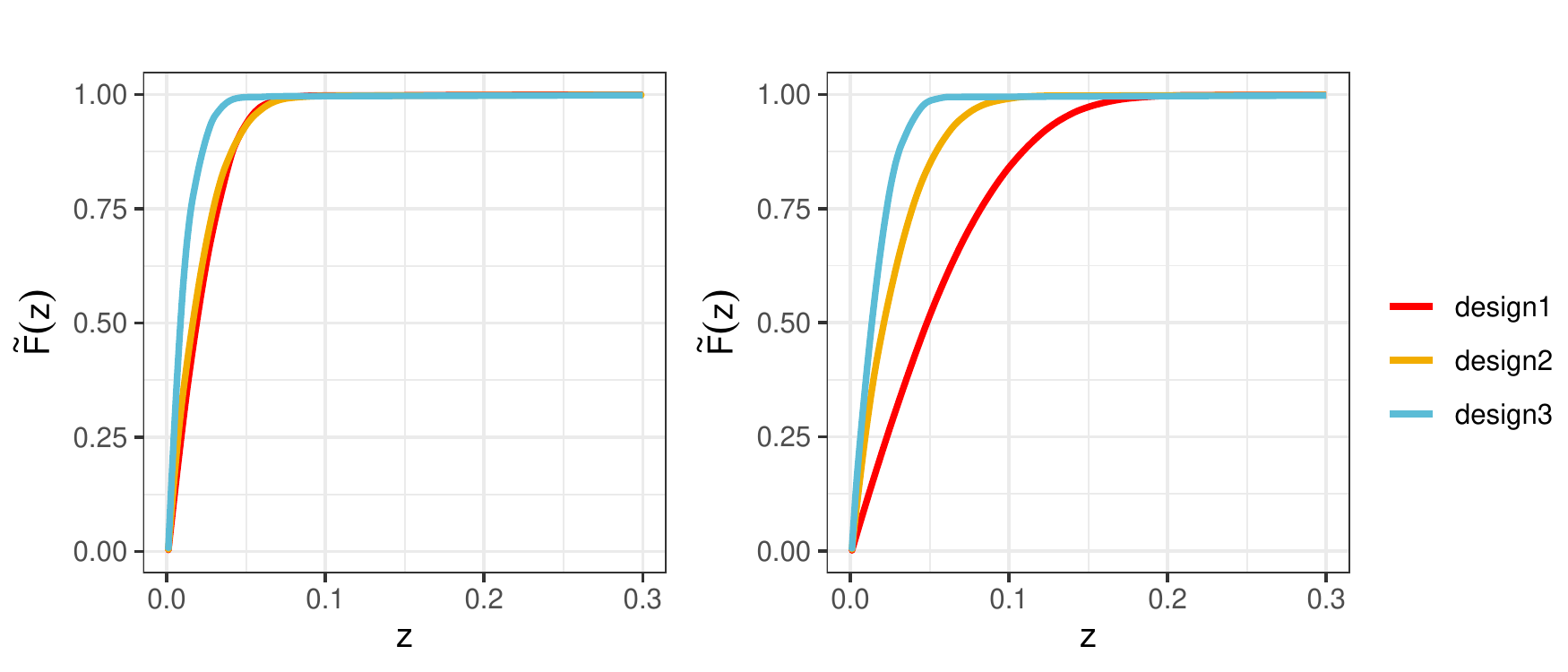}
\vspace{-1em} 
\end{center}
\caption{\textit{Left panel}: representation of $\max(\tilde{F}_1(z), \tilde{F}_2(z), \tilde{F}_3(z))$. \textit{Right panel}: representation of $\min(\tilde{F}_1(z), \tilde{F}_2(z), \tilde{F}_3(z))$.}
\label{fig:maxminDR}
\end{figure}
\FloatBarrier

We also produce an approximation to $\tilde{F}_1(z)\land\tilde{F}_2(z)\land\tilde{F}_3(z)$, which corresponds to  the pessimistic (less certain) approach.  The results are shown in the right panel of Figure \ref{fig:maxminDR} in which we obtain the minimum of the empirical cdfs associated with individual scenarios. In this case, we observe a clear ordering between design options: design option 3 majorises design option 2, which majorises design option 1. Under both the pessimistic and optimistic outlooks, we conclude that design option 3 is less uncertain than the two alternatives.

\begin{figure}[h!]
\begin{center}
\includegraphics[width=1\textwidth]{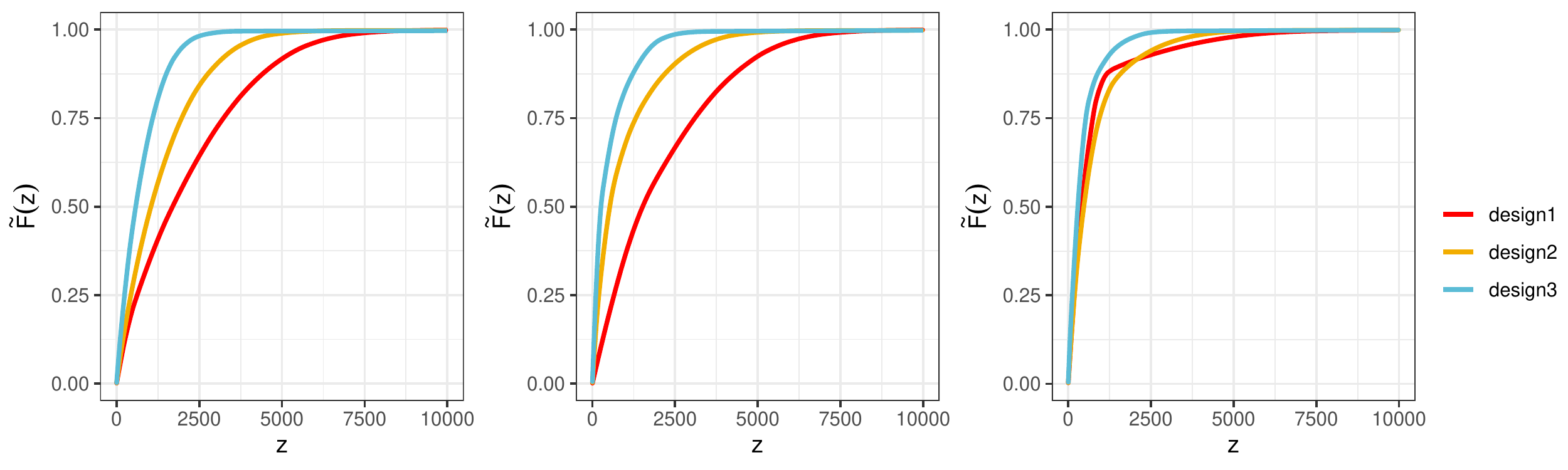}
\vspace{-2em} 
\end{center}
\caption{Cdfs from inverse mixing with different weightings on each scenario: \textit{Left panel}: equal weights on each scenario. \textit{Central panel}: $\alpha_{G}=0.7$, $\alpha_{N}=0.2$ and $\alpha_{M}=0.1$. \textit{Right panel}: $\alpha_{G}=0.05$, $\alpha_{N}=0.05$ and $\alpha_{M}=0.9$}
\label{fig:HeatInverseMixing}
\end{figure}
 
In practice, the proposed uncertainty tools provide experts and analysts with additional ways to express their expert judgements. For instance, the weights in inverse mixing represent the probabilities of the occurrence of each scenario. Let $\alpha_{G}, \alpha_{N}$ and $\alpha_{M}$ be the weights applied to the Green, Neutral and Market scenarios, respectively.  We consider three cases: (i) equal weights, (ii) $\alpha_G=0.7$, $\alpha_M=0.15$ and $\alpha_N=0.15$ and (iii) $\alpha_G=0.05$, $\alpha_M=0.9$ and $\alpha_N=0.05$. The cdf from inverse mixing for each of these cases is shown in Figure \ref{fig:HeatInverseMixing}.  In cases (i) and (ii), there are clear orderings in which the cdf of design option 3 lies above the cdf of design option 2 which lies above that of design option 1.  In case (iii), however, there is no ordering between the empirical cdfs since the cdfs for design options 1 and 2 cross. However, the cdf associated with design option 3 majorises the cdfs for both design options 1 and 2 and we conclude that design option 3 is the least risky option in all three cases. It is important to note that, whilst the above results provide useful guidance for comparing uncertainty, the uncertainty is only one aspect of such decisions and one would want to take into account the actual costs and carbon emissions (rather than just their variability) in each case. However, here we have demonstrated majorisation to be an intuitive approach to comparing uncertainty and ultimately aiding informed decisions in such settings.

\section{Concluding remarks}\label{sec:conclusion}
The concept of uncertainty is the subject of much discussion, particularly at the technical interface between scientific modelling and statistics. We suggest that majorisation, which only compares the rank order of probability mass, continuous or discrete, provides a valuable form of uncertainty. We have shown that any two distributions can be compared, and consider this to be a principal contribution of the paper. 
We demonstrated this approach to assess the uncertainty with examples from well known distributions and in applications of climate projections and energy systems. The algorithms are straightforward and were introduced to enhance the understanding of the concept of majorisation. The idea presented is that a candidate for a wider framework is a stochastic ordering for which most, if not all, types of entropy are order preserving. 

When events cannot be compared with respect to the majorisation ordering,  questions of relative uncertainty are unanswerable: if events can be compared, the comparison is stronger as it can be made for fewer pairs of events. We believe that this strong condition deserves to be a candidate for a universal version of what is meant by `more certain' or `more uncertain.'

Extensions to our approach to uncertainty include: developing computationally efficient and scalable algorithms to perform empirical decreasing rearrangements; using majorisation in sensitivity analysis, that is, the study of the propagation of variability through systems from input to output; further exploring the properties of the uncertainty ring and lattice and the connection between the two algebraic structures.

\section*{Acknowledgements}
We would like to thank Chris Dent (Edinburgh), Jim Smith (Warwick) and Peter Challenor (Exeter) for their senior support. Authors three and four acknowledge the the EU grant ReUseHeat, ID: 767429, conducted under H2020-EU 3.3.1.

\bibliographystyle{plain}
\bibliography{References}
\end{document}